\newtheorem{theorem}{Theorem}
\newtheorem*{theorem*}{Theorem}
\newtheorem{proposition}[theorem]{Proposition}
\newtheorem*{fact*}{Fact}
\newtheorem*{factA}{Fact A}
\newtheorem*{factB}{Fact B}
\newtheorem{definition}[theorem]{Definition}
\newtheorem*{definition*}{Definition}
\newtheorem{remark}[theorem]{Remark}
\numberwithin{theorem}{section}
\numberwithin{equation}{section}
\definecolor{medgreen}{rgb}{0 , 0.7 , 0}
\definecolor{purple}{rgb}{0.7 , 0 , 0.7}
\newcommand{\note}[1]{
}
\author{Fran\c{c}ois Gu\'eritaud}
\address{CNRS and Universit\'e Lille 1, Laboratoire Paul Painlev\'e, 59655 Villeneuve d'Ascq Cedex, France 
\newline Wolfgang-Pauli Institute, University of Vienna, CNRS-UMI 2842, Austria}
\email{francois.gueritaud@math.univ-lille1.fr}
\thanks{Partially supported by the Agence Nationale de la Recherche under grants ETTT (ANR-09-BLAN-0116-01), DiscGroup (ANR-11-BS01-013), Labex CEMPI (ANR-11-LABX-0007-01).}
\title[Veering triangulations and Cannon-Thurston maps]{Veering triangulations \\ and Cannon-Thurston maps}
\date{June 2015}
\begin{document}
\begin{abstract}
Any hyperbolic surface bundle over the circle gives rise to a continuous surjection from the circle to the sphere, by work of Cannon and Thurston. We prove that the order in which this surjection fills out the sphere is dictated by a natural triangulation of the surface bundle (introduced by Agol) when all singularities of the invariant foliations are at punctures of the fiber.
\end{abstract}

\maketitle

\section{Introduction}

Surface bundles over the circle are historically an important source of examples in hyperbolic 3-manifold theory. Thurston proved that, barring natural topological obstructions, they always carry complete hyperbolic metrics, which was a first step towards Perelman's geometrization of 3-manifolds. Cannon and Thurston \cite{CT} found surprising sphere-filling curves naturally associated to hyperbolic surface bundles.

In \cite{Agol}, Agol singled out a special class of hyperbolic surface bundles: the ones for which singularities of the invariant foliations occur only at punctures of the fiber.
He proved that such surface bundles come with a natural (topological) ideal triangulation.

The purpose of this paper is to exhibit, for such surface bundles, a correspondence between Agol's triangulation and the corresponding Cannon-Thurston map. The correspondence takes the form of a pair of tessellations of the plane $\mathbb{C}$: (1) the link of a vertex $\Omega$ of the universal cover of Agol's triangulation; (2) a plane tiling recording the order in which the Cannon-Thurston map fills out the sphere $\mathbb{C}\cup\{\infty\}$, switching colors at each passage through the parabolic fixed point $\infty$. Object (1) is clearly a triangulation of the plane, though it may not be realized by non-overlapping Euclidean triangles in $\mathbb{C}$ (Agol's triangulation is only topological, not geodesic). Object (2) is clearly a partition of $\mathbb{C}$, although it will take work to determine that it is actually a tessellation into topological disks (typically with fractal-looking boundaries). In the end (Theorem~\ref{thm:dico} below), the two tessellations turn out to fully determine each other at the combinatorial level, and in particular share the same vertex set. This connection between tessellations was previously known for punctured torus bundles by results of Cannon--Dicks \cite{CD} and Dicks--Sakuma \cite{Dicks-Sakuma}, whose work was a crucial inspiration. These results were announced in \cite{annonce}.

\subsection{Hyperbolic mapping tori and invariant foliations}
Let $S$ be an oriented surface with at least one puncture, and $\varphi:S\rightarrow S$ an orientation-preserving homeomorphism. Define the \emph{mapping torus} $M_\varphi:=S\times [0,1]/\!\sim_\varphi$, where $\sim_\varphi$ identifies $(x,1)$ with $(\varphi(x),0)$. The topological type of the 3-manifold $M_\varphi$ depends only on the isotopy type of $\varphi$. 

Suppose $S$ has a half-translation structure, \emph{i.e.}\ a singular Euclidean metric with a finite number of conical singularities of cone angle $k\pi$ $(k\geq 3)$, and total cone angle $k'\pi$ ($k'\geq 1$) around each puncture. Every straight line segment in $S$ then belongs to a unique (singular) foliation by parallel straight lines. The surface $S$ with cone points removed admits an isometric atlas over $\mathbb{R}^2$ whose chart maps are all of the form $(x,y)\mapsto (\alpha,\beta)\pm (x,y)$ for some reals $\alpha,\beta$. The group $\mathrm{SL}_2(\mathbb{R})$ acts on the space of such atlases by composition with the charts, hence $\mathrm{PSL}_2(\mathbb{R})$ acts on the space of (isometry classes of) half-translation surfaces endowed with a privileged pair of perpendicular foliations by straight lines. A landmark result of Thurston's \cite{FLP, Otal} is 
\begin{factA} \label{fact:bundles}
Suppose the isotopy class $[\varphi]$ preserves no finite system of simple closed curves on $S$ ($\varphi$ is called \emph{pseudo-Anosov}). Then there exists a half-translation structure on $S$ such that $\varphi:S\rightarrow S$ is realized by a diagonal element 
$(\begin{smallmatrix} \alpha & \\ & \alpha^{-1} \end{smallmatrix})$, where $\alpha>1$; the vertical and horizontal foliations of $S$ for this structure, called $\lambda^+$ and $\lambda^-$, are preserved by $\varphi$ and come equipped with transverse measures that are preserved up to a factor $\alpha$ (resp. $\alpha^{-1}$). 

Moreover, the mapping torus $M_\varphi$ admits a (unique) complete hyperbolic metric: $M_\varphi\simeq \Gamma\backslash \mathbb{H}^3$ for some discrete group $\Gamma < \mathrm{PSL}_2\mathbb{C}=\mathrm{Isom}_0(\mathbb{H}^3)$.
\end{factA}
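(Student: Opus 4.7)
The statement splits naturally into two parts: (i) the existence of the half-translation structure with diagonal action, and (ii) the hyperbolicity of $M_\varphi$. I would attack each along classical lines---Thurston's classification of mapping classes for (i), and Otal's fibered hyperbolization theorem for (ii).

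For (i), the plan is to exploit the action of $[\varphi]$ on Thurston's space $\mathcal{PML}(S)$ of projective measured laminations. This space is a sphere compactifying Teichmüller space $\mathcal{T}(S)$ to a closed ball on which the mapping class group acts continuously. Brouwer's fixed point theorem supplies a fixed point; since there is no finite-order assumption that would allow a fixed point inside $\mathcal{T}(S)$, the fixed point must lie on the boundary $\mathcal{PML}(S)$. The hypothesis that $[\varphi]$ preserves no finite system of simple closed curves rules out reducible behavior and forces the fixed lamination to be minimal, filling, and without closed leaves. A north--south dynamics argument then produces \emph{two} fixed points $\lambda^+$ and $\lambda^-$, respectively attracting and repelling, satisfying $\varphi^* \lambda^+ = \alpha\,\lambda^+$ for some $\alpha > 1$; invariance of the geometric intersection pairing forces $\varphi^* \lambda^- = \alpha^{-1}\lambda^-$. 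Since $\lambda^+$ and $\lambda^-$ are mutually transverse and filling, a neighborhood of each non-singular point carries a canonical singular Euclidean chart with orthogonal foliations given by $\lambda^\pm$; the index formula pins down the cone angles at singularities to be of the prescribed form. This \emph{is} the desired half-translation structure, in which $\varphi$ acts by the diagonal matrix.

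For (ii), the manifold $M_\varphi$ is Haken (the fiber $S$ is incompressible) and atoroidal (an essential torus would force a reducing curve, contradicting pseudo-Anosov). I would apply Thurston's hyperbolization theorem, following Otal in the fibered case. Fix a conformal structure $X$ on $S$ and consider the sequence of quasi-Fuchsian manifolds $Q(X, \varphi^n X)$. Because $\varphi$ is pseudo-Anosov, $\varphi^n X$ converges projectively to $\lambda^+$ and $\varphi^{-n} X$ to $\lambda^-$ in $\mathcal{PML}(S)$. Otal's double limit theorem then yields a convergent subsequence of the corresponding representations $\pi_1(S) \to \mathrm{PSL}_2\mathbb{C}$, with limit a doubly degenerate Kleinian surface group whose ending laminations are $\lambda^\pm$. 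The mapping class $\varphi$, twisted appropriately into the approximants, survives in the limit as a loxodromic isometry of $\mathbb{H}^3$ normalizing the image of $\pi_1(S)$; taking the quotient produces the complete hyperbolic metric on $M_\varphi$, and Mostow--Prasad rigidity gives uniqueness.

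The technical heart---and the step I would expect to fight hardest with---is Otal's double limit theorem. The obstruction is controlling a sequence of Kleinian groups along a degenerating direction so that no accidental parabolics appear in the limit. The pseudo-Anosov hypothesis is precisely what rules this out, via the fact that $\lambda^+$ and $\lambda^-$ fill $S$ and are transverse to every simple closed curve---so the outputs of part (i) feed directly into part (ii). Once compactness is secured, the remaining items (discreteness, faithfulness, and the absence of extraneous parabolics beyond those coming from the punctures of $S$) follow from standard Kleinian group arguments.
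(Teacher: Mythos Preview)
The paper does not prove Fact~A at all: it is stated as background (``A landmark result of Thurston's''), with references to \cite{FLP} for the classification of mapping classes and \cite{Otal} for the hyperbolization of fibered $3$-manifolds. So there is no ``paper's own proof'' to compare against.

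Your outline is a faithful sketch of the classical arguments found in exactly those references. Part~(i) is the standard Thurston route through the action on $\mathcal{PML}(S)$ and the Brouwer fixed point; part~(ii) is Otal's proof via the double limit theorem. One small wording issue: you write that the absence of a ``finite-order assumption'' forces the fixed point to the boundary, but the logic is the reverse---a fixed point in $\mathcal{T}(S)$ would make $[\varphi]$ periodic, and a periodic map \emph{does} preserve a finite system of curves (the orbit of any curve), contradicting the hypothesis. Otherwise the sketch is sound and matches what the cited sources do.
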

In Thurston's proof, which gave the first abundant source of examples of hyperbolic 3-manifolds, the foliations $\lambda^{\pm}$ are in fact an important tool to construct the hyperbolic metric on $M_\varphi$. The half-translation structure on $S$ in the result above is itself unique up to the action of diagonal elements of $\mathrm{PSL}_2(\mathbb{R})$.

\subsection{Combinatorics of veering triangulations}
We now describe a construction of Agol's triangulation, an alternative to \cite{Agol} which may be of separate interest.
An ideal tetrahedron is a space diffeomorphic to a compact tetrahedron minus its 4 vertices. An ideal triangulation of a $3$-manifold $M$ is a realization of $M$ as a union of finitely many ideal tetrahedra, glued homeomorphically face-to-face.

\begin{definition*}
A \emph{taut structure} on an ideal triangulation of an oriented $3$-manifold $M$ (into $n$ tetrahedra) is a map from the set of all $6n$ dihedral angles of the tetrahedra into $\{0,\pi\}$ such that each tetrahedron has one pair of opposite edges labeled $\pi$ and all other edges labeled $0$; and each degree-$k$ edge of $M$ is adjacent to precisely two angles labelled $\pi$ and $(k-2)$ edges labelled $0$.
\end{definition*}
A taut structure can be viewed as a crude attempt at endowing the tetrahedra of $M$ with geometric shapes in order to realize the hyperbolic metric (very crude indeed since all tetrahedra look flat!).

In a rhombus of $\mathbb{R}^2$ symmetric across both coordinate axes, we call the two edges with positive slope \emph{rising}, the other two edges \emph{falling}. The diagonals, which are segments of the coordinate axes, are called vertical and horizontal.
\begin{definition*} \label{def:veering}
A taut structure on an ideal triangulation of an oriented $3$-manifold $M$ is called \emph{veering} if its edges can be $2$-colored, in red and blue, so that every tetrahedron can be sent by an orientation-preserving map to the one pictured in Figure~\ref{tetrahedron}: a thickened rhombus in $\mathbb{R}^2\times \mathbb{R}$ with $\pi$ on the diagonals and $0$ on other edges; with the vertical diagonal in front, the horizontal diagonal in the back, rising edges red, and falling edges blue. (The diagonals might be any color.) 
\end{definition*}
\begin{figure}[h!]
\centering
\labellist
\small\hair 2pt
\pinlabel $\pi$ [c] at 44 50
\pinlabel $\pi$ [c] at 30 39
\pinlabel $0$ [c] at 20 55
\pinlabel $0$ [c] at 20 15
\pinlabel $0$ [c] at 73 55
\pinlabel $0$ [c] at 73 15
\pinlabel $\pi$ [c] at 137 45
\pinlabel $0$ [c] at 122 30
\pinlabel $0$ [c] at 153 30
\pinlabel {base} [c] at 137 19
\pinlabel {tip} [c] at 138 57
\endlabellist
\includegraphics[width=9cm]{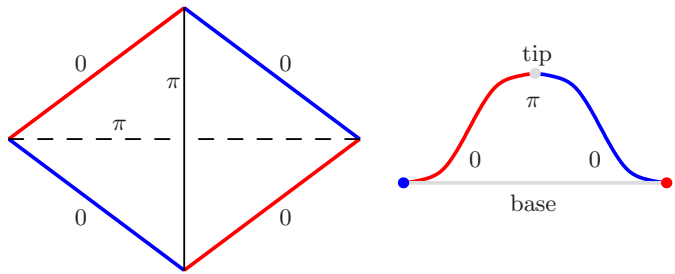}
\caption{\emph{Left}: a flattened tetrahedron with 4 colored edges. 
\newline 
\emph{Right}: the triangular link at any of the 4 vertices. Angles 0 and $\pi$ are indicated by a graphical, train-track-like convention. The tip and base, drawn in grey, receive colors (blue/red) from the adjacent triangles. The triangle is called \emph{hinge} if and only if the tip and base have different colors.}
\label{tetrahedron}
\end{figure} 
In \cite{HRST} and \cite{FG}, veering triangulations are shown to admit \emph{positive angle structures}: this is a less crude (linearized) version of the problem of finding the complete hyperbolic metric on $M$ endowed with a geodesic triangulation. Interestingly however, Hodgson, Issa and Segerman found veering triangulations that are in fact not realized geodesically but have instead some tetrahedra turning ``inside out'' \cite{Issa}.

In \cite{Agol}, Agol described a \emph{canonical, veering} triangulation of a general hyperbolic mapping torus $M=M_\varphi$, provided all singularities of the foliations $\lambda^+, \lambda^-$ occur at punctures of the fiber~$S$. 
Our first main result is an alternative construction of Agol's triangulation (details in Section~\ref{sec:triangulation}).
\begin{theorem} \label{thm:veering}
Suppose all singularities of the invariant foliations $\lambda^\pm$ of the pseudo-Anosov monodromy $\varphi:S\rightarrow S$ are at punctures of $S$. 
Any maximal immersed rectangle $R$ in $S$ with edges along leaf segments of $\lambda^\pm$ contains one singularity in each of its four sides. Connecting these four ideal points, and thickening in the direction transverse to $S$, yields a tetrahedron $\Delta_R\subset S\times \mathbb{R}$. The tetrahedra $\Delta_R$ glue up to yield a veering triangulation of $S\times \mathbb{R}$, compatible with the equivalence relation $(x,t+1)\sim_\varphi (\varphi(x),0)$, hence descending to $M_\varphi$.
\end{theorem}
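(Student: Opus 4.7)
My plan is to organize the maximal rectangles in $S$ via the diagonal action $g_t := (\begin{smallmatrix}e^t & 0 \\ 0 & e^{-t}\end{smallmatrix})$ on the half-translation structure, so that each rectangle acquires an interval $I_R \subset \mathbb{R}$ of ``active'' times, and then define the tetrahedron $\Delta_R$ by thickening $R$ over this interval in $S \times \mathbb{R}$. Before invoking the time parameter, I first verify that every maximal immersed rectangle $R$ carries exactly one puncture in the interior of each of its four sides: the hypothesis that all singularities of $\lambda^\pm$ occur at punctures means the only obstruction to enlarging $R$ in a given direction is a puncture on the corresponding side, so each side must support at least one (else $R$ extends), and any side with two or more punctures would let $R$ be split into two strictly enlargeable sub-rectangles. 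I label the four punctures $N, E, S, W$ in the evident way, so that $NS$ and $EW$ are the vertical and horizontal diagonals of $R$.

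For each rectangle $R$ I assign a closed interval $I_R = [t_R^-, t_R^+] \subset \mathbb{R}$ of ``lifetimes'' as follows: informally, $R$ is active at $t$ iff, in $g_tS$, no new puncture has yet entered a side of $R$ (which would force a split) and no corner puncture has yet drifted off its side. The topological ideal tetrahedron $\Delta_R \subset S \times \mathbb{R}$ is then the join of the segment $EW \subset S \times \{t_R^-\}$ with the segment $NS \subset S \times \{t_R^+\}$, with ideal vertices $(N, t_R^+), (S, t_R^+), (E, t_R^-), (W, t_R^-)$. The two diagonal edges $NS, EW$ carry dihedral angle $\pi$, and the four side edges $NE, ES, SW, WN$ carry angle $0$, matching the taut local model of Figure~\ref{tetrahedron}.

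Face-gluings come from combinatorial adjacencies of maximal rectangles: $R$ and $R'$ are adjacent iff they share three of their four side-punctures, in which case $\Delta_R$ and $\Delta_{R'}$ share the triangular face spanned by those three common punctures. At each flip event $t = t_R^\pm$, exactly one new puncture arrives on a side of $R$ while one old puncture retires to a corner, producing the unique adjacent $R'$. Covering is immediate: for $(p,t)$ with $p$ off the singular leaves of $g_tS$, there is a unique $R$ with $t \in I_R$ and $p \in R$, and $(p,t)$ lies in the interior of $\Delta_R$. The veering coloring is obtained by coloring each side edge $PQ$ of every $\Delta_R$ by the slope sign of the pair $(P,Q)$ in the half-translation structure (rising $=$ red, falling $=$ blue), a datum that depends only on the pair of punctures and is therefore consistent across all face-gluings. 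Equivariance: since $\varphi$ acts on the half-translation structure as $g_{\log\alpha}$, the set of combinatorial types of maximal rectangles is permuted by $\varphi$ and each $I_R$ is translated uniformly by $\log\alpha$; hence the cellulation is invariant under the identification $(x, t) \sim (\varphi(x), t - \log\alpha)$ and descends to $M_\varphi$.

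The main technical obstacle is the \emph{binary flip claim} at the endpoints of $I_R$: at each $t_R^\pm$ exactly one new puncture arrives, producing a single adjacent rectangle $R'$ that shares three of its four punctures with $R$. Ruling out simultaneous degenerations on opposite sides of $R$ (or other coincidences) requires a local analysis of leaves near ideal vertices of $S$, and is where the hypothesis that all singularities are at punctures does the essential work; once binarity of flips is established, the face-to-face property, the veering coloring, and the descent to $M_\varphi$ follow directly.
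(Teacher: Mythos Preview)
Your overall strategy --- one tetrahedron per maximal rectangle, organized vertically by the diagonal flow $g_t$ --- is the right picture, and it matches the theorem statement. But the execution has two genuine problems. First, the argument that each side of $R$ carries \emph{at most} one puncture is not what you wrote: having two punctures on a side does not contradict maximality of $R$ (you can always split a maximal rectangle). The real reason is that two punctures on a single side would lie on a common leaf segment of $\lambda^\pm$, i.e.\ a saddle connection; applying powers of $\varphi$ would make such connections arbitrarily short, contradicting discreteness of the singular set in $\overline{S}$. Second, and more seriously, your lifetime interval $I_R$ is not well-defined: punctures do not ``enter sides'' or ``drift off'' as $t$ varies, since $g_t$ only rescales the metric while leaving the combinatorics of singularity-free rectangles unchanged. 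Consequently your covering claim (``for $(p,t)$ there is a unique $R$ with $t\in I_R$ and $p\in R$'') is false as stated --- maximal rectangles overlap heavily in $S$ --- and the ``binary flip claim'' is not a coherent statement about the geometry.

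The paper's route is substantially different from what you sketch. Rather than assigning lifetimes to rectangles, it fixes $t$ and considers maximal singularity-free \emph{squares} in $(\overline{\Sigma},g_t)$: the inscribed polygons of these squares form a Delaunay-type cellulation of $\overline{S}_t$. The technical heart is a five-case analysis showing that two such inscribed polygons can meet only along a shared edge or vertex, together with a one-parameter family argument showing each Delaunay edge bounds exactly two cells. Once this cross-sectional cellulation is established for every $t$, the tetrahedra appear automatically: they are the diagonal exchanges in the Delaunay triangulation as $t$ varies, each occurring at the single instant $t_R$ when the rectangle $R$ becomes a square. Your ``binary flip claim'' is morally the statement that each Delaunay triangle is a face of exactly two tetrahedra (the flips that create and destroy it), but proving this without the intermediate time-$t$ cellulation requires exactly the kind of local analysis you defer --- and that analysis is the whole content of the proof.
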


The veering structure in the above theorem is given as follows: $\Delta_R$ has its $\pi$-angles at the edges connecting points belonging to opposite sides of $R$ (the edge connecting horizontal sides being in front); an edge of positive slope is red; an edge of negative slope is blue.

Remark: unlike Agol's original definition, this construction does not rely on any auxiliary choices (\emph{e.g.}\ of train tracks): as a result it should generalize to the Cannon-Thurston maps of degenerate surface groups built by Mahan Mj \cite{Mj}, when the ending laminations define foliations with no saddle-point connection. The focus of this paper being combinatorics, we choose however to restrict to surface bundles.

\medskip
In order to state the main result, we now point out some features inherent to any veering triangulation $\mathcal{T}$ (see Figure~\ref{link}, and \cite{FG} for detailed proofs). The link of a vertex $\Omega$ of the universal cover $\widetilde{ \mathcal{T}}$ is a tessellation $\Delta$ of the plane. The vertices and edges of $\Delta$ receive colors (red/blue) from the edges of $\mathcal{T}$.

\begin{figure}[h!]
\centering
\labellist
\small\hair 2pt
\endlabellist
\includegraphics[width=8cm]{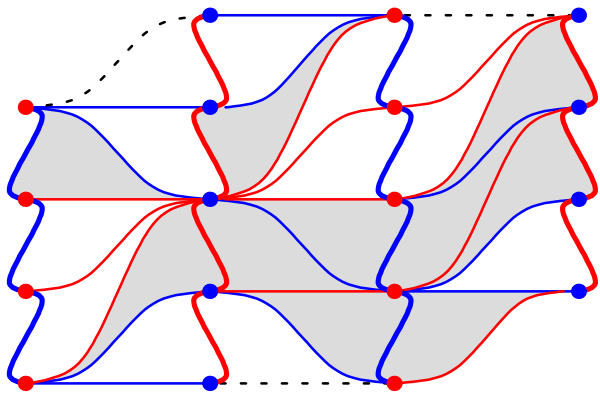}
\caption{Three adjacent ladders in the vertex link $\Delta$ of a veering triangulation. Ladderpoles (vertical) are slightly thicker. The middle ladder is ascending (tips above base rungs). The colors of the rungs are determined by the combinatorics of this chunk of $\Delta$, except the 3 dotted rungs. Triangles with no dotted edge are shaded when they are hinge.}
\label{link}
\end{figure} 

Among the triangles of $\Delta$, we may distinguish two types: a triangle coming from truncating a tetrahedron (thickened rhombus) whose diagonals are of opposite colors is called \emph{hinge}; other triangles are \emph{non hinge}. See Figure~\ref{tetrahedron} and its caption.

An edge of $\Delta$ connecting two vertices of the same color is called a \emph{ladderpole edge} (and is always of the opposite color). The other edges are called \emph{rungs}. It turns out that each vertex belongs to exactly two ladderpole edges, and ladderpole edges arrange into infinitely many, disjointly embedded simplicial lines of alternating colors, called \emph{ladderpoles}. Every rung connects two vertices from two consecutive ladderpoles. The region between two consecutive ladderpoles is called a \emph{ladder}. In each ladder, all triangles have their $\pi$-angle, or \emph{tip}, on the same side of their base rung (say \emph{above} the base rung if we arrange the ladder vertically with suitable orientation); in the next ladder the tips are on the other side (\emph{below} the base rungs). Ladders of the former type are called ascending, of the latter type descending; see Figure~\ref{link}.

Note that vertices of $\Delta$ have well-defined coordinates in the plane $\mathbb{C}$ (up to similarity), given by any developing map of the hyperbolic metric on $M_\varphi$ that takes $\Omega$ to $\infty\in\mathbb{P}^1\mathbb{C}=\partial_\infty \mathbb{H}^3$. However, we will usually draw the ladderpoles as vertical lines (with regular meanders to respect the train-track convention for angles $0$ and $\pi$ as in Figure \ref{link}), emphasizing the combinatorics at the expense of the geometry.

\subsection{The Cannon-Thurston map, and the Main Result} \label{sec:mainresult}
Let $\Sigma$ (a disk) be the universal cover of the fiber $S$ of the hyperbolic surface bundle $M=M_\varphi$, and~$\mathbb{S}$ (a circle) the natural boundary of $\Sigma$. 
The inclusion $S\rightarrow M$ lifts to a map $\iota: \Sigma \rightarrow \mathbb{H}^3$ between the universal covers. Bowditch \cite{bowditch}, generalizing work of Cannon and Thurston \cite{CT}, proved the following surprising fact.
\begin{factB} \label{fact:CT}
The\note{Caution: ``Fact~B'' later referred to manually} map $\iota$ extends continuously to a boundary map $\overline\iota:\mathbb{S}\rightarrow \mathbb{P}^1\mathbb{C}$ which is a \emph{surjection} from the circle to the sphere. The endpoints of any leaf of $\lambda^\pm$ (lifted to~$\Sigma$) have the same image under $\overline\iota$, and this in fact generates all the identifications occurring under $\overline\iota$. 
\end{factB}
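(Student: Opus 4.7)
The plan is to construct $\overline\iota$ from the lifted foliations $\tilde\lambda^\pm$ on $\Sigma$ and verify each claim separately. Equip $\Sigma$ with the lift of a complete hyperbolic structure on $S$, so $\Sigma\simeq\mathbb{H}^2$ and $\mathbb{S}=\partial_\infty\mathbb{H}^2$, and fix a loxodromic $t\in\Gamma$ realizing $\varphi$ by conjugation of $\pi_1(S)$. The finite transverse measures of $\tilde\lambda^\pm$ force each non-singular leaf to be a properly embedded arc in $\Sigma$ with two distinct endpoints on $\mathbb{S}$. Declare $\xi\sim\eta$ on $\mathbb{S}$ iff they are joined by a finite chain of such leaves meeting at singular points; by a Moore-theorem argument the equivalence classes are non-separating continua and $\mathbb{S}/{\sim}$ is a topological $2$-sphere. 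Define the candidate extension $\overline\iota$ as the quotient map $\mathbb{S}\to\mathbb{S}/{\sim}$ followed by a $\Gamma$-equivariant identification $\mathbb{S}/{\sim}\simeq\mathbb{P}^1\mathbb{C}$.

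For continuous extension I would use the standard Cannon--Thurston geometric criterion: for each $R>0$ there must exist $R'$ such that any geodesic of $\mathbb{H}^2$ whose $\iota$-image meets $B_R\subset\mathbb{H}^3$ itself meets $B_{R'}\subset\mathbb{H}^2$. A violating sequence of geodesic segments, after translation by elements of $\pi_1(S)$ back to a fixed fundamental domain in $S$, would converge to a lamination on $S$ preserved by some power of $\varphi$, hence to $\lambda^+$ or $\lambda^-$. The Anosov contraction property of $t$ — it contracts leaf-lengths on one of $\tilde\lambda^\pm$ by $\alpha^{-1}$ — then forces the allegedly bounded $\mathbb{H}^3$-paths to escape every compact set, a contradiction. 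The same contraction argument, applied to a single leaf of finite singular-Euclidean length, shows its two endpoints in $\mathbb{S}$ map to the same point of $\mathbb{P}^1\mathbb{C}$: iterate $\tilde\varphi^n$ to shrink the leaf in the singular Euclidean metric, push through $\iota$, and use continuity. Thus the continuous extension of $\iota$ factors through $\mathbb{S}/{\sim}$ and agrees with $\overline\iota$.

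For surjectivity: $M_\varphi$ has finite volume, so $\Gamma$ is a non-elementary lattice in $\mathrm{PSL}_2\mathbb{C}$ with limit set all of $\mathbb{P}^1\mathbb{C}$; the infinite-index normal subgroup $\pi_1(S)$ is non-elementary with the same limit set, and this limit set lies in the closed $\pi_1(S)$-invariant image $\overline\iota(\mathbb{S})$. For exhaustiveness of the leaf-chain identifications, suppose $\overline\iota(\xi)=\overline\iota(\eta)$ with $\xi\neq\eta$: the $\mathbb{H}^2$-geodesic from $\xi$ to $\eta$ has $\iota$-image both of whose ends converge to a single point of $\mathbb{P}^1\mathbb{C}$. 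Pushing by large powers of $t$ and using $\delta$-hyperbolicity of $\mathbb{H}^3$, this geodesic must asymptotically fellow-travel leaves of $\tilde\lambda^+$ or $\tilde\lambda^-$, so $\xi,\eta$ lie in the same equivalence class.

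The main obstacle is the geometric criterion underlying continuity: ruling out exotic ways in which geodesics of $\Sigma$ can escape to infinity while their $\iota$-images remain bounded, beyond the obvious ones coming from $\tilde\lambda^\pm$. This step uses the pseudo-Anosov structure essentially, and in our punctured-surface setting — where $M_\varphi$ has cusps — requires Bowditch's refinements to control parabolic elements, precisely the point where the argument departs from the original compact-fiber setup of Cannon and Thurston.
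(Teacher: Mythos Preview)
The paper does not prove Fact~B at all: it is quoted as a known result, attributed to Cannon--Thurston \cite{CT} in the closed-fiber case and to Bowditch \cite{bowditch} for punctured fibers, and used as a black box throughout the rest of the argument. So there is no ``paper's own proof'' to compare against; your proposal is an attempt to sketch the proof of a cited theorem.

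As a sketch of the Cannon--Thurston/Bowditch argument, your outline is broadly in the right spirit but has a structural issue and one genuinely thin step. The structural issue: you first \emph{define} $\overline\iota$ as a quotient map to $\mathbb{S}/{\sim}$ composed with an unexplained ``$\Gamma$-equivariant identification $\mathbb{S}/{\sim}\simeq\mathbb{P}^1\mathbb{C}$'', and only afterwards argue that the continuous extension of $\iota$ agrees with it. This is backwards. The identification you invoke is exactly what needs to be proved; the honest order is (i) show $\iota$ extends continuously to $\overline\iota:\mathbb{S}\to\mathbb{P}^1\mathbb{C}$, (ii) show $\overline\iota$ is constant on $\sim$-classes, (iii) show $\overline\iota$ separates distinct $\sim$-classes, (iv) conclude the induced map $\mathbb{S}/{\sim}\to\mathbb{P}^1\mathbb{C}$ is a continuous bijection from a sphere (Moore) to a sphere, hence a homeomorphism. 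Your Moore-theorem paragraph is then a consequence, not a definition.

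The thin step is exhaustiveness (your last paragraph before the obstacle remark): ``pushing by large powers of $t$'' and invoking fellow-traveling is not by itself a proof that a geodesic of $\Sigma$ whose $\iota$-image has coinciding endpoints must be asymptotic to a leaf of $\tilde\lambda^\pm$. One has to control how such a geodesic sits relative to the suspension flow and the singular-Euclidean geometry, and rule out that it spirals toward something other than a leaf; in the cusped case this is precisely where Bowditch's horoball-truncation and electrification machinery enters, and it is substantially more delicate than your sentence suggests. You correctly flag this as the main obstacle, but the sketch as written does not contain the idea that resolves it.
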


In this note, we prove a correspondence between the combinatorics of the Cannon-Thurston map $\overline{\iota}$ (the ``order in which $\overline\iota$ fills out the sphere'') and the triangulation $\Delta$ of the plane. To state the correpondence, we will first need to prove facts about~$\overline{\iota}$ (details in Section~\ref{sec:CT}).

Recall the chosen parabolic fixed point $\infty$ of the Kleinian group $\Gamma$.
The surjection $\overline\iota:\mathbb{S} \rightarrow \mathbb{C} \cup \{\infty\}$ goes infinitely many times through the point $\infty$, by Fact~B (indeed there are infinitely many leaves terminating at a given parabolic boundary point of~$\Sigma$). We may imagine that $\overline\iota$ \emph{changes color} (red/blue) each time it goes through~$\infty$: the resulting coloring of the plane $\mathbb{C}$ becomes an interesting object to look at.

\begin{theorem} \label{thm:jordan}
There exists a $\mathbb{Z}$-family of Jordan curves $J_i$ of $\mathbb{C}\cup\{\infty\}$, bounding domains $D_i$, with the following properties:
\begin{itemize}
 \item For all $i$ the curve $J_i$ goes through $\infty$;
 \item $\dots \supset D_{-1} \supset D_0 \supset D_1 \supset D_2\supset\dots$;
 \item $\bigcap_{i\in\mathbb{Z}} D_i = \emptyset$ and $\bigcup_{i\in\mathbb{Z}} D_i = \mathbb{C}$; 
 \item $J_i \cap J_{i'} = \{\infty\}$ if and only if $|i'-i|> 1$;
 \item For every $i\in\mathbb{Z}$, the closure of $D_i\smallsetminus D_{i+1}$ is the union of a family $\{\delta^i_s\}_{s\in\mathbb{Z}}$ of closed disks, all disjoint except that each $\delta^i_s$ shares one boundary point with $\delta^i_{s+1}$;
 \item Between the $i$-th and $(i+1)$-st color switches, the map $\overline{\iota}$ fills out the $\delta^i_s$ one by one; the order of filling switches with the parity of $i$.
\end{itemize}
\end{theorem}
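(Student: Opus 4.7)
My plan is to build the curves $J_i$ and tiles $\{\delta^i_s\}$ from the ladders of $\Delta$ and to verify the correspondence with $\overline{\iota}$ via Fact~B. The dictionary is as follows: Fact~B identifies $\overline{\iota}^{-1}(\infty)$ with the set of $\mathbb{S}$-endpoints of $\lambda^\pm$-leaves (lifted to $\Sigma$) terminating at lifts of punctures projecting to the cusp of $\Omega$, and Theorem~\ref{thm:veering} identifies these leaves with edges of $\widetilde{\mathcal{T}}$ incident to $\Omega$, i.e., with vertices of $\Delta$. Under this dictionary, the vertex sequence along each ladderpole $L_i$ of $\Delta$ corresponds to a biinfinite sequence of preimages of $\infty$ on $\mathbb{S}$, and the triangles of the $i$-th ladder correspond to a $\mathbb{Z}$-sequence of arcs of $\mathbb{S}$.

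I would then define $J_i$ to be the $\overline\iota$-image of the closure of the $\mathbb{S}$-arc carved out by $L_i$, and $\delta^i_s$ to be the $\overline\iota$-image of the $\mathbb{S}$-arc for the $s$-th triangle of the $i$-th ladder. The disk $D_i$ is the component of $(\mathbb{C}\cup\{\infty\})\smallsetminus J_i$ lying on the side of the ladderpoles of higher index. Adjacent tiles $\delta^i_s$, $\delta^i_{s+1}$ then share exactly the single parabolic point (preimage of $\infty$) separating their defining $\mathbb{S}$-arcs; and adjacent curves $J_i$, $J_{i+1}$ share finite points arising from the rung combinatorics linking $L_i$ and $L_{i+1}$, while non-adjacent ladderpoles do not.

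The nesting $\dots\supset D_{-1}\supset D_0\supset D_1\supset\dots$, together with $\bigcap D_i=\emptyset$ and $\bigcup D_i=\mathbb{C}$, then follows from the linear order of ladders in the plane tessellation $\Delta$; the ``$\{\infty\}$-only intersection'' criterion for non-adjacent ladders follows because non-adjacent ladderpoles have no rungs between them. The chain description of $D_i\smallsetminus D_{i+1}$ mirrors the decomposition of the $i$-th ladder into its triangles under the pinching described above. Finally, the filling-order claim comes from the cyclic order on $\mathbb{S}$: between two consecutive color switches, $\overline\iota$ traces out the $\mathbb{S}$-arcs for the $i$-th ladder in their monotone ordering in $s$, and this ordering reverses between consecutive $i$ because ascending and descending ladders alternate in $\Delta$ (Figure~\ref{link}), flipping the direction in which the rungs are indexed.

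The main obstacle will be in verifying that each $J_i$ is a genuine \emph{Jordan} (simple closed) curve on $\mathbb{C}\cup\{\infty\}$, i.e., that the concatenated $\overline\iota$-arcs introduce no unforeseen self-intersections beyond the designated pinch points. By Fact~B, any such extra identification would have to arise from a $\lambda^\pm$-leaf joining two distinct points on the defining $\mathbb{S}$-arc for $L_i$; the combinatorics of $L_i$ being a ladderpole should preclude such a short-circuiting leaf, since the existence of such a leaf would force a tetrahedron incompatible with the ladder structure of the veering triangulation. With $J_i$ established as a Jordan curve, the identification of each $\delta^i_s$ as a closed topological disk follows by the Jordan curve theorem together with the same leaf-exclusion argument applied to its defining $\mathbb{S}$-arc.
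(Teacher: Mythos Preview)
Your proposal has two genuine gaps that make the argument unworkable as stated.

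First, your opening dictionary is wrong. You claim that the vertices of $\Delta$ (equivalently, the edges of $\widetilde{\mathcal{T}}$ incident to $\Omega$) correspond to preimages of $\infty$ under $\overline{\iota}$. They do not. By Theorem~\ref{thm:veering}, an edge of the veering triangulation at $\Omega$ is a \emph{ruling segment} $[\Omega,p]$, the diagonal of a singularity-free rectangle; under $\overline{\iota}$ such a path is sent to a \emph{finite} point of $\mathbb{C}$ (indeed these are exactly the gates of the tessellation). The actual preimage $\overline{\iota}^{-1}(\infty)$ consists of $\Omega$ together with the half-leaves $\ell_i$ of $\lambda^\pm$ issued from $\Omega$; these are not edges of the triangulation. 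Consequently your statement that adjacent tiles $\delta^i_s,\delta^i_{s+1}$ share ``the single parabolic point (preimage of $\infty$)'' is also wrong: the shared point is a finite gate $\overline{\iota}([\Omega,p^i_s])$, while $\infty$ is only reached in the limit $s\to\pm\infty$.

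Second, and more seriously, you define $J_i$ as the $\overline{\iota}$-image of an arc of $\mathbb{S}$. Since $\overline{\iota}$ is a sphere-filling map, the image of any nondegenerate arc of $\mathbb{S}$ is a two-dimensional region, never a Jordan curve. No amount of ``leaf-exclusion'' can save this: there are always leaves whose endpoints lie in any given arc of $\mathbb{S}$, because the endpoints of leaves of $\lambda^\pm$ are dense in $\mathbb{S}$. The paper's construction is completely different: it introduces, for each $t\in[0,+\infty]$, the \emph{hook} $\Gamma_i^\pm(t)$ (follow $\ell_i$ for length $t$, then turn onto a perpendicular leaf), and sets $J_i(t):=\overline{\iota}(\Gamma_i^\pm(t))$. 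This is a genuine curve parametrised by $t$, and the injectivity of $J_i$ on $[0,+\infty)$ is obtained from a careful classification (Proposition~\ref{prop:shootoff}) of which $\overline{\iota}$-fibres can meet more than one quadrant at $\Omega$: at most three consecutive quadrants, and the pair $(i,t)$ is recoverable from the fibre. Your proposal never produces a one-parameter family of points at all, so there is nothing to which a Jordan-curve argument could apply.

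Finally, even if those two points were repaired, using the ladder structure of $\Delta$ as an input reverses the paper's logic: Theorem~\ref{thm:jordan} is established in Section~\ref{sec:CT} from the flat geometry and Fact~B alone, and only afterwards (Section~\ref{sec:dico}) is it matched with $\Delta$. You would at minimum need to check that nothing you use about ladders secretly depends on the conclusion you are proving.
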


\begin{figure}[h!]
\centering
\labellist
\small\hair 2pt
\pinlabel {$\color{medgreen} J_0$} [c] at 10 5
\pinlabel {$\color{purple} J_1$} [c] at 27 5
\pinlabel {$\color{medgreen} J_2$} [c] at 44 5
\pinlabel {$\color{purple} J_3$} [c] at 61 5
\pinlabel {$\color{medgreen} J_4$} [c] at 78 5
\pinlabel {$\color{purple} J_5$} [c] at 95 5
\pinlabel $\delta$ [c] at 52 58
\pinlabel $\bullet$ [c] at 52.5 80
\pinlabel $\bullet$ [c] at 52.5 28
\pinlabel $\circ$ [c] at 39 36.75
\pinlabel $\circ$ [c] at 39 59.7
\pinlabel $\circ$ [c] at 39 71.25
\pinlabel $\circ$ [c] at 66 71
\pinlabel $\varepsilon$ [c] at 43 48
\pinlabel $\varepsilon'$ [c] at 46.5 34
\pinlabel {$\varepsilon$  in-furrow edge} [l] at 105 55
\pinlabel {$\varepsilon'$  cross-furrow edge} [l] at 104 47
\pinlabel {$\bullet$ gate of $\delta$} [l] at 104 39
\pinlabel {$\circ$ spike of $\delta$} [l] at 104 31
\endlabellist
\includegraphics[width=7cm]{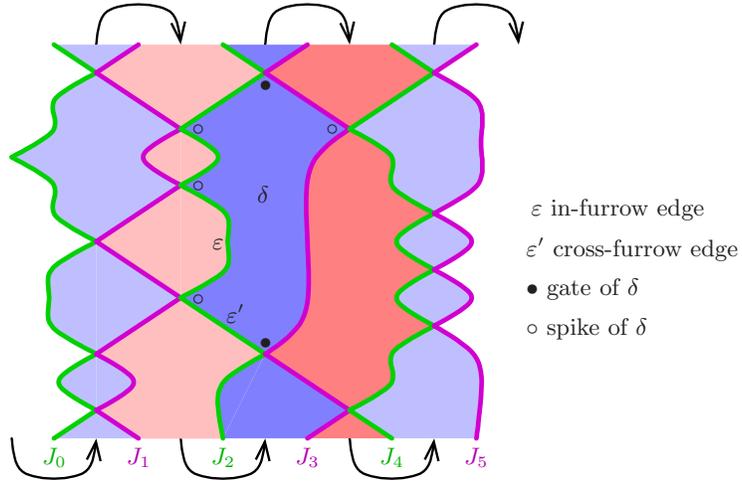}
\caption{Six Jordan curves $J_i$ through $\infty$ bound 5 consecutive furrows, alternatingly red and blue. 
The domain $D_i$ is the area to the right of $J_i$. 
The disk $\delta$ belongs to the furrow $D_2\smallsetminus D_3$.
Arrows materialize in which order furrows are filled out by $\overline{\iota}$. 
}
\label{furrow}
\end{figure} 

By this theorem, the trajectory of the plane-filling curve $\overline{\iota}$ is reminiscent of that of a plowing ox (or boustrophedon, the name of an ancient writing style): we consequently call the closure of $D_i\smallsetminus D_{i+1}$ a \emph{furrow}; see Figure~\ref{furrow}. The disks $\{\delta^i_s\}_{(i,s)\in\mathbb{Z}^2}$ making up all the furrows define a tessellation of $\mathbb{C}$ in which every vertex has order $4$, and is adjacent to two consecutive disks of the $i$-th furrow, one disk of the $(i-1)$-st furrow and one disk of the $(i+1)$-st furrow (for some $i$). Each disk $\delta$ of the $i$-th furrow has:
\begin{itemize}
 \item 2 vertices (the \emph{gates} of $\delta$) shared with other disks of the $i$-th furrow;
 \item some nonnegative number of vertices (called \emph{spikes} of $\delta$) shared with disks of the $(i+2)$-nd or $(i-2)$-nd furrow.
\end{itemize}
Edges of the tessellation always separate disks from consecutive furrows; they come in two types (Figure \ref{furrow}):
\begin{itemize}
 \item \emph{in-furrow} edges, connecting consecutive gates of the same furrow (or equivalently, two consecutive spikes of some disk of the adjacent furrow);
 \item \emph{cross-furrow} edges, connecting two gates of adjacent furrows (or equivalently, the first or last spike on one side of some disk to the adjacent gate).
\end{itemize}

With this terminology, we can state our main result, which consists of (1) a full dictionary between the various features of the two tessellations, and (2) a recipe book to reconstruct (combinatorially) one tessellation from the other.
\begin{theorem} \label{thm:dico}
{\bf (1) Dictionary.}
The Cannon-Thurston tessellation and the link $\Delta$ of the Agol triangulation have the same vertex set, and there are natural bijections between the following objects:

\begin{center}
\begin{tabular}{r|l}
\emph{Cannon-Thurston tessellation} & \emph{Triangulation $\Delta$} \\ \hline
 Vertices & Vertices \\
 Furrows & Ladderpoles \\
 Disks & Ladderpole edges \\
 Spikes & Rungs \\
$ \left . \begin{array}{l} \text{In-furrow} \\ \text{Cross-furrow} \end{array} \right \}$ edges &
$ \left . \begin{array}{l} \text{Non-hinge} \\ \text{Hinge} \end{array} \right \}$ triangles
\end{tabular}\end{center}

{\bf (2) Recipe book.} 
Given the link $\Delta$, we can obtain topologically the 1-skeleton of the Cannon-Thurston tessellation by drawing, for each triangle, an arc from its tip to the tip of the next triangle across the base rung (this may create double edges along the ladderpoles: keep them). See Figure~\ref{translate}, left.

Conversely, given the Cannon-Thurston tessellation, we can obtain the 1-skeleton of the triangulation $\Delta$ by adding edges connecting each gate of a blue (resp.\ red) cell to all the vertices clockwise (resp.\ counterclockwise) until the other gate, and deleting redundant edges. See Figure~\ref{translate}, right.
\end{theorem}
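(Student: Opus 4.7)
The plan is to show that both tessellations can be read off from a single combinatorial object---the arrangement of edges of $\widetilde{\mathcal{T}}$ incident to $\Omega$, together with their veering labels---and to verify the forward recipe explicitly. Throughout I fix $\Omega=\infty$ in the upper half-space model; the stabilizer of $\Omega$ in $\Gamma$ is then a rank-$2$ parabolic group, with one generator arising from the loop around the puncture of $S$ and the other from the monodromy $\varphi$. As a preliminary step I identify the two vertex sets. The vertices of $\Delta$ are, by definition, the parabolic fixed points of $\Gamma$ in $\mathbb{C}$ connected to $\Omega$ by an edge of $\widetilde{\mathcal{T}}$. On the Cannon-Thurston side, the vertices of the tessellation (gates and spikes) are the $\overline\iota$-images of parabolic boundary points of $\Sigma$, hence also parabolic fixed points of $\Gamma$. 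That the two sets coincide should follow from the construction in Theorem~\ref{thm:veering}: an edge of $\widetilde{\mathcal{T}}$ at $\Omega$ is specified by a maximal leaf segment of $\lambda^\pm$ whose singular endpoint is the puncture corresponding to $\Omega$.

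Next I set up the structural correspondence. Each ladderpole of $\Delta$ is a simplicial line invariant under the cusp-loop translation, with a natural $\mathbb{Z}$-indexing; the $\varphi$-translation acts transversally, shifting ladderpoles to ladderpoles just as it shifts the nested family $\{J_i\}$ of Theorem~\ref{thm:jordan}. I would make this precise by showing that the vertices along a single ladderpole accumulate in $\mathbb{C}$ onto the boundary of one furrow $D_i\smallsetminus D_{i+1}$, closing up through $\infty$ to trace out $J_i$; this identifies ladderpoles with furrows. The consecutive ladderpole edges along a ladderpole then match the consecutive disks $\delta^i_s$ of the corresponding furrow, and the rungs emanating from the ladderpole provide the spikes. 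Finally, the distinction between hinge and non-hinge triangles encodes whether the tip of a triangle lies on the same ladderpole as its base rung (non-hinge, in-furrow) or on the neighboring one (hinge, cross-furrow), matching the dichotomy between in-furrow and cross-furrow edges of the CT tessellation.

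With the dictionary in place, the forward recipe amounts to tracing the $1$-skeleton of the CT tessellation triangle by triangle. Each triangle $t$ is the link at $\Omega$ of some tetrahedron $\Delta_R$, and its tip corresponds to the vertex of $\Delta_R$ opposite the $\Omega$-edge; the tip-to-tip arc across the base rung is then the CT-tessellation edge separating the disks above and below that base rung, which is in-furrow or cross-furrow according to the type of triangle. The converse recipe is symmetric bookkeeping: from any gate of a disk, the ``clockwise until the other gate'' sweep produces the ladderpole-edge/rung combinatorics of $\Delta$, with each ladderpole edge rediscovered from its two gates (hence the duplicate edges to delete). The main obstacle, I anticipate, will be verifying that the asymptotic curve traced in $\mathbb{C}$ by the vertices of a single ladderpole is genuinely a Jordan curve rather than a merely immersed loop, and that the disks predicted by the recipe carve $\mathbb{C}$ into the correct furrow pattern. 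Since the CT map has fractal character, this planarity statement must be extracted from the local veering combinatorics---in particular the alternation of hinge and non-hinge triangles along each ladder, which should force exactly the nesting and adjacency of the $J_i$ asserted in Theorem~\ref{thm:jordan}.
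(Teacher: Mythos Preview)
Your proposal has a genuine gap: you are missing the key intermediary object that makes the dictionary work. The paper does not compare the two tessellations directly in $\mathbb{C}$. Instead, it translates \emph{both} into the language of the singular flat surface $\overline{\Sigma}$: each feature on either side corresponds to a specific configuration of singularity-free rectangles with $\Omega$ in a corner or an edge. Vertices of $\Delta$ are \emph{ruling segments} $[\Omega,p]$ (diagonals of singularity-free rectangles), not ``maximal leaf segments of $\lambda^\pm$'' as you write---edges of the Agol triangulation are never along leaves. Ladderpole edges correspond to pairs of consecutive ruling singularities in one quadrant; hinge versus non-hinge triangles correspond to two inequality patterns on the heights of the three non-$\Omega$ singularities in a maximal rectangle, and these patterns are exactly what Proposition~\ref{prop:jordan2} (via the coordinates of ruling singularities) identifies as cross-furrow versus in-furrow edges. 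The bijectivity is then immediate, because both sides are parametrized by the same rectangles.

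Your alternative route---showing that vertices along a ladderpole ``accumulate onto'' a Jordan curve $J_i$---is not how the paper proceeds and is likely unworkable as stated: the $J_i$ are already constructed in Theorem~\ref{thm:jordan} from the Cannon--Thurston map and the hook paths $\Gamma_i^\pm$, independently of $\Delta$, so there is nothing left to verify about their Jordan property. Your characterization of hinge triangles (``tip lies on the same ladderpole as its base rung'') is also not right: every triangle has its tip on one of the two ladderpoles bounding its ladder, and the base rung touches both; the hinge/non-hinge distinction is about the \emph{colors} of the two $\pi$-edges of the underlying tetrahedron, which in the flat picture becomes a height comparison between ruling singularities in adjacent quadrants. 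Without the rectangle picture you have no mechanism to detect this, and hence no way to match it to the in-furrow/cross-furrow dichotomy.
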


\begin{figure}[h!]
\centering
\labellist
\small\hair 2pt
\endlabellist
\includegraphics[width=12cm]{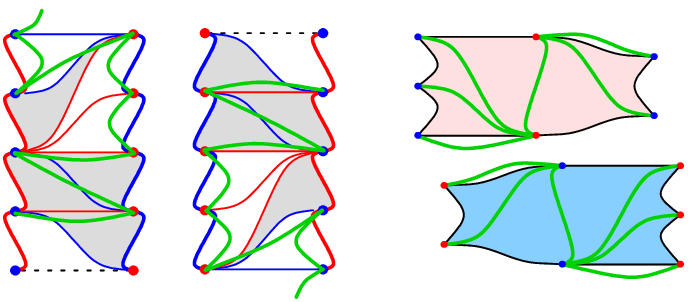}
\caption{\emph{Left}: two ladders (ascending and descending) of the Agol triangulation, with, superimposed in green, the tip-to-tip edges to be drawn to obtain the topological 1-skeleton of the Cannon-Thurston tessellation. Hinge triangles are shaded.
\newline
\emph{Right}: two disks (from a red and a blue furrow) of the Cannon-Thurston tessellation,  with, superimposed in green, the edges to be inserted to obtain the Agol cusp triangulation (before deletion of redundant edges).}
\label{translate}
\end{figure} 

In the theorem above, the redundant edges to be deleted in the last step can be either present in the initial Cannon-Thurston tessellation (namely, the boundary of a cell $\delta^i_s$ with no spikes consists of two mutually isotopic edges along a ladderpole; Figure \ref{furrow} shows four such ``small'' cells) --- or they can be created during the process of adding extra edges.

\begin{remark}
As mentioned above, the vertices of the two tessellations of Theorem \ref{thm:dico} are well-defined complex algebraic numbers. Due to the orientation issue raised in \cite{Issa}, however, edges of $\Delta$ must be thought of combinatorially, not as straight segments. On the other hand, edges of the Cannon-Thurston tessellation are precise, fractal-looking plane curves: some examples are beautifully rendered in \cite{CD, Dicks-Sakuma}.
\end{remark}

\subsection*{Notation}
Throughout the paper, we will denote by $\Sigma$ the universal cover of the fiber $S$ of the surface bundle $M_\varphi$, and by $\overline{\Sigma}$ and $\overline{S}$ the metric completions of $\Sigma$ and $S$ respectively, for the locally Euclidean metric. There is a commutative diagram
\begin{equation} \label{com}
 \begin{array}{ccc} \Sigma & \hookrightarrow & \overline{\Sigma} \\ \downarrow && \downarrow \\ S & \hookrightarrow & \overline{S}. \end{array}
 \end{equation}
However, note that the righmost vertical map $\overline{\Sigma} \rightarrow \overline{S}$ is \emph{not} a universal covering: it has infinite branching above all the points representing punctures of $S$.

\subsection{Plan of the paper}
In Section~\ref{sec:triangulation} we prove Theorem~\ref{thm:veering}. In Section~\ref{sec:normalforms} we study geodesics in a half-translation surface to produce a combinatorial description of the source circle $\mathbb{S}$ of the Cannon-Thurston map $\overline{\iota}$. In Section~\ref{sec:CT} we use this understanding to prove Theorem~\ref{thm:jordan} on the combinatorics of $\overline{\iota}$. Finally, in Section~\ref{sec:dico} we prove Theorem~\ref{thm:dico}, making each line of the ``dictionary'' correspond to a certain type of rectangle in the foliated surface $(\Sigma,\lambda^\pm)$.


\section{The canonical veering triangulation} \label{sec:triangulation}

We now prove Theorem~\ref{thm:veering}. Let $S,\lambda^\pm, \varphi$ be as in the theorem, and $\overline{S}, \Sigma, \overline{\Sigma}$ as in \eqref{com}.
Let $g_0$ be a singular Euclidean metric on $\overline{\Sigma}$ that makes the measured foliations $\lambda^+$ and $\lambda^-$ vertical and horizontal, and $g_t:=(^{e^t}{}_{e^{-t}})g_0$.

By a singularity-free rectangle in $\overline{\Sigma}_t:=(\overline{\Sigma}, g_t)$, we mean an embedded rectangle whose sides are leaf segments of $\lambda^{\pm}$ and which contains no singularity except possibly in its boundary. 
Note that a singularity-free rectangle contains \emph{at most} one singularity in each edge: indeed no leaf of $\lambda^+$ or $\lambda^-$ can connect two singularities, otherwise there would be arbitrarily short such leaves (by applying $\varphi^{\pm 1}$ many times), contradicting the fact that the singular set is finite in the quotient $\overline{S}$.

A singularity-free rectangle in $\overline{\Sigma}_t$ receives a height and a width (depending on~$t$) from the transverse measures on $\lambda^\pm$. We speak of a singularity-free \emph{square} if the height and width are equal.

The following construction is an analogue of the Delaunay triangulation relative to the singular set, with circles replaced by squares.\footnote{An analogous construction works with circles (or squares) replaced by any convex, centrally symmetric plane shape $\mathcal{C}$, provided no straight segment between singularities is parallel to a segment of $\partial \mathcal{C}$. See \cite{Klyachin} for related ideas, as well as \cite{L1} and references therein.} In the context of this paper, we will just refer to it as the Delaunay cellulation.
\begin{proposition}
Connecting the singularities found in the boundary of every maximal singularity-free square of $\overline{\Sigma}_t$ produces, in the quotient $\overline{S}_t$, finitely many triangles and (exceptionally) quadrilaterals, which define a cellulation of $\overline{S}_t$.
\end{proposition}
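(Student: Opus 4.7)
The plan is to realize the claimed cellulation as the Delaunay dual of an $L^\infty$-Voronoi decomposition of $\overline{\Sigma}_t$ associated to the singularities, where the $L^\infty$ metric is taken with respect to the two axes given by the foliations $\lambda^\pm$; its balls are precisely the axis-aligned squares appearing in the statement.

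First I analyze a single maximal singularity-free square $Q$. Each of its four sides must carry a singularity: otherwise one could slide that side outward and rescale about a shifted center to obtain a strictly larger singularity-free square, contradicting maximality. No side can carry two singularities, since such a pair would lie on a common leaf of $\lambda^+$ or $\lambda^-$, forming a saddle connection that is excluded by the observation just before the proposition. Counting a corner singularity as belonging to both adjacent sides, the only possibilities are then: three singularities (one at a corner, two in the interiors of the non-adjacent sides) giving a triangle $P_Q$; or four singularities (one in the interior of each side) giving a quadrilateral $P_Q$, the exceptional case. A parameter count in $(c_x,c_y,r)$-space (four linear side-pinning equations versus three unknowns) indicates that the quadrilateral arrangement is codimension one relative to the triangle arrangement, matching the word ``exceptionally'' in the statement.

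Second I introduce the $L^\infty$-Voronoi tessellation $\{V_s\}$ of $\overline{\Sigma}_t$, with $V_s$ the closure of the set of points having $s$ as the unique $L^\infty$-nearest singularity. These cells tile $\overline{\Sigma}_t$, meeting along $1$-dimensional boundaries whose vertices are exactly the centers of the maximal singularity-free squares above. Dualizing (vertices $=$ singularities, edges $=$ adjacencies of the $V_s$, $2$-cells $=$ the $P_Q$) produces the desired cellulation. The construction is $\pi_1(S)$-equivariant, so it descends to $\overline{S}_t$; compactness of $\overline{S}_t$ (the finite-area completion of a finite-type half-translation surface) then gives finiteness of the cellulation in the quotient.

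The main obstacle is making the $L^\infty$ Voronoi--Delaunay duality rigorous. Unlike the Euclidean case, $L^\infty$-bisectors of two points are piecewise-affine curves rather than lines, and can even broaden into $2$-dimensional strips precisely when the two points share one of their coordinates. Those degenerate strips would correspond exactly to saddle connections (two singularities on a common leaf of $\lambda^\pm$), which the hypothesis excludes. Thus the no-saddle-connection assumption furnishes exactly the genericity needed to secure $1$-dimensional bisectors and a clean duality. A smaller secondary task is checking that each $V_s$ is a topological disk meeting its neighbors in the cyclic order expected by the Delaunay combinatorics, which is a local analysis near each cone singularity.
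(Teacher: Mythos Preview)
Your opening claim---that every maximal singularity-free square carries a singularity on each of its four sides---is false, and the ``slide that side outward and rescale'' argument does not work for \emph{squares}. Take $Q=[-r,r]^2$ with singularities only at interior points $(a,r)$ and $(b,-r)$ of the top and bottom sides. Any axis-aligned square $Q'\supsetneq Q$ has both points in its $x$-range (since $-r<a,b<r$ and the $x$-range of $Q'$ contains $[-r,r]$), so both must lie on the horizontal edges of $Q'$; forcing $(a,r)$ onto the top edge and $(b,-r)$ onto the bottom edge of $Q'$ yields $r'=r$, a contradiction. So $Q$ is maximal with two empty sides, and the associated Delaunay cell is the \emph{segment} $[(a,r),(b,-r)]$. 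The paper explicitly allows this: Delaunay cells can be edges, triangles, or quadrilaterals. Your description of the triangle case (one singularity at a corner, two on the far sides) is likewise off: that configuration imposes four incidence constraints on $(c_x,c_y,r)$ and is just as exceptional as the quadrilateral. The generic triangle has three singularities in the interiors of three sides, with the fourth side empty.

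The paper's proof takes a different route entirely and does not invoke Voronoi duality. Existence of maximal squares is shown by a concrete growing procedure. Non-overlapping of distinct Delaunay cells is proved by a direct five-case analysis of the relative positions of two maximal squares $Q,Q'$, showing in each case that the convex hulls of their boundary singularities meet in at most a common edge. That every Delaunay edge $[A,B]$ bounds exactly two $2$-cells is seen via the one-parameter family of squares through $A$ and $B$: the two half-squares on either side of $[AB]$ vary monotonically, and each first hits a singularity at some extremal parameter value. Finiteness on $\overline{S}_t$ is then a compactness argument.

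Your $L^\infty$ Voronoi--Delaunay approach is a reasonable alternative in spirit, and the no-saddle-connection hypothesis is indeed exactly what rules out two-dimensional bisectors. But to make it go through you must first repair the local analysis of maximal squares (allowing the edge case and correcting the triangle description), and then actually carry out the duality on a cone surface whose singularities have large (indeed infinite, in $\overline{\Sigma}_t$) cone angle---you flag this as the main obstacle but do not address it. As written, the proposal is a plan with a genuine error at its first step rather than a proof.
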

\begin{proof}
First, there exist maximal singularity-free squares in $\overline{\Sigma}_t$: to find one, start with a singularity $p$; construct a small square $Q$ containing $p$ in the interior of one edge; then scale $Q$ up with respect to $p$ until $Q$ bumps into another singularity~$p'$. If $p'$ belongs to the interior of the edge opposite to $p$ then $Q$ is maximal. Otherwise, $Q$ has a corner $q$ whose two adjacent closed edges contain $\{p,p'\}$ in their union; scale $Q$ further up with respect to $q$ until $Q$ bumps into a third singularity $p''$, necessarily in one of the two remaining edges. Then $Q$ is maximal. (For certain values of $t$, there may simultaneously appear a fourth vertex $p'''$ in the last edge.)

We call the convex hull of the singularities contained in the boundary of a maximal singularity-free square a \emph{Delaunay cell}. Delaunay cells can be either edges, or triangles, or quadrilaterals (the latter do not occur for generic $t$). We claim that:
\begin{equation}
\begin{array}{l} \text{\textit{Two distinct Delaunay cells in $\overline{\Sigma}_t$ can only intersect}} \\ \text{\textit{ (if at all) along an edge or a vertex.}}
\end{array} \label{eq:inter}
\end{equation}

To see this, consider two maximal singularity-free squares $Q=ABCD$ and $Q'=A'B'C'D'$ in $\overline{\Sigma}_t$. If $Q$ and $Q'$ have disjoint interiors, the conclusion is immediate. Otherwise, denote by $D_Q, D_{Q'}$ the Delaunay cells in $Q$ and $Q'$ respectively.
Up to permuting $Q, Q'$, rotating and relabelling the vertices, we are in one of the following 5 situations (see Figure~\ref{delaunay}):
\begin{figure}[h!]
\centering
\labellist
\small\hair 2pt
\pinlabel $A$ [c] at 5 40
\pinlabel $B$ [c] at 39 41
\pinlabel $C$ [c] at 39 8
\pinlabel $D$ [c] at 4 6
\pinlabel $A'$ [c] at 16 27
\pinlabel $B'$ [c] at 49 27
\pinlabel $C'$ [c] at 49 2
\pinlabel $D'$ [c] at 14 2
\pinlabel $E$ [c] at 39 30
\pinlabel $F$ [c] at 14 13
\pinlabel $A$ [c] at 61 41
\pinlabel $B$ [c] at 95 42
\pinlabel $C$ [c] at 95 8
\pinlabel $D$ [c] at 60 6
\pinlabel $A'$ [c] at 80 32
\pinlabel $B'$ [c] at 105 33
\pinlabel $C'$ [c] at 105 16
\pinlabel $D'$ [c] at 80 15
\pinlabel $E$ [c] at 89 32
\pinlabel $F$ [c] at 89 15
\pinlabel $A$ [c] at 117 41
\pinlabel $B$ [c] at 152 42
\pinlabel $C$ [c] at 148 6
\pinlabel $D$ [c] at 115 6
\pinlabel $A'$ [c] at 132 32
\pinlabel $B'$ [c] at 162 33
\pinlabel $C'$ [c] at 159 6
\pinlabel $D'$ [c] at 133 6
\pinlabel $E$ [c] at 145 29
\pinlabel $F$ [c] at 140 5
\pinlabel $A$ [c] at 174 41
\pinlabel $B$ [c] at 209 42
\pinlabel $C$ [c] at 205 6
\pinlabel $D$ [c] at 171 6
\pinlabel $A'$ [c] at 190 32
\pinlabel $B'$ [c] at 221 33
\pinlabel $C'$ [c] at 217 6
\pinlabel $D'$ [c] at 193 6
\pinlabel $E$ [c] at 202 28
\pinlabel $A$ [c] at 231 40
\pinlabel $B$ [c] at 259 40
\pinlabel $C$ [c] at 263 6
\pinlabel $D$ [c] at 227 6
\pinlabel $A'$ [c] at 238 40
\pinlabel $B'$ [c] at 272 40
\pinlabel $C'$ [c] at 275 6
\pinlabel $D'$ [c] at 244 6
\pinlabel $E$ [c] at 251 40
\pinlabel $F$ [c] at 255 5
\pinlabel $(1)$ [c] at 20 49
\pinlabel $(2)$ [c] at 77 49
\pinlabel $(3)$ [c] at 134 49
\pinlabel $(4)$ [c] at 191 49
\pinlabel $(5)$ [c] at 255 49
\endlabellist
\includegraphics[width=13cm]{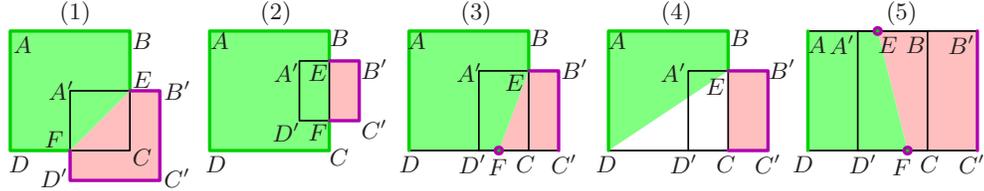}
\caption{Five possible relative positions of maximal singularity-free squares $Q=ABCD$ and $Q'=A'B'C'D'$. The subsets of $\partial Q$ and $\partial Q'$ that may contain singularities are marked by thicker lines, respectively green and purple. Convex regions that contain the Delaunay polygons $D_Q$ and $D_{Q'}$ are shaded.}
\label{delaunay}
\end{figure} 
\begin{enumerate}
 \item 
 The open segment $(BC)$ intersects $(A'B')$ at a point $E$, and $(CD)$  intersects $(A'D')$ at a point $F$. Then all singularities in $\partial Q$ belong to the broken line $[EBADF]$, so $D_Q$ is contained in the pentagon $EBADF$; similarly $D_{Q'}\subset EB'C'D'F$. These pentagons share just one edge $EF$, hence the result.
 \item 
 The open segment $(BC)$ intersects $(A'B')$ at a point $E$ and $(C'D')$ at a point $F$, with $B,E,F,C$ lined up in that order. Then all singularities in $\partial Q$ belong to the broken line $[EBADCF]$, so $D_Q$ is contained in its convex hull $ABCD$; similarly $D_{Q'}\subset EB'C'F$. These rectangles share just one edge $EF$, hence the result.
 \item 
 The open segment $(BC)$ intersects $(A'B')$ at a point $E$, the points $D$, $D'$, $C$, $C'$ are lined up in that order, and $[DC]$ contains a singularity $F$. Since the leaves of $\lambda^\pm$ through $F$ contain no other singularity than $F$, the singularities of $\partial Q$ (resp.\ $\partial Q'$) lie in the broken line $[FDABE]$ (resp.\ $[FC'B'E]$). The convex hulls of these broken lines are polygons sharing just one edge $EF$.
 \item
 The open segment $(BC)$ intersects $(A'B')$ at a point $E$, the points $D$, $D'$, $C$, $C'$ are lined up in that order, and $[DC]$ contains \emph{no} singularity. The singularities of $\partial Q$ (resp.\ $\partial Q'$) lie in the broken line $[DABE]$ (resp.\ $CC'B'E$). The convex hulls of these broken lines are polygons sharing just one vertex $E$.
\item
The points $A,A', B, B'$ are lined up in that order, and so are $D, D', C, C'$. Let $E$ be a point on $[AB']$, equal to the singularity if there is one on this segment. Let $F$ be a point on $[DC']$, equal to the singularity if there is one on this segment. The singularities in $\partial Q$ (resp.\ $\partial Q'$) are in the broken line $[EADF]$ (resp.\ $[EB'C'F]$). The convex hulls of these broken lines are polygons sharing just one edge $EF$. This proves \eqref{eq:inter}.
\end{enumerate}

Since the Delaunay polygons have disjoint interiors in $\overline{\Sigma}_t$, in particular \eqref{eq:inter} implies that the projections of these interiors in the quotient surface~$S_t$ are \emph{embedded} (not just immersed).

Next, we claim that every side $[AB]$ of a Delaunay polygon is a side of exactly one other Delaunay polygon, adjacent to the first one. This is clear if one considers the 1-parameter family of all squares in the Euclidean plane containing a pair of points $\{A, B\}$ in their boundary (Figure~\ref{bump}): $[AB]$ subdivides each square of the family into two regions, which vary monotonically (for the inclusion) in opposite directions with the parameter $\tau$. Thus for each side of $[AB]$ there is an extremal value of $\tau$ at which the region on that side bumps for the first time into one or (exceptionally) two singularities. The convex hull of the union of these singularities and $\{A, B\}$ is the Delaunay polygon on that side of $[AB]$.

\begin{figure}[h!]
\centering
\labellist
\small\hair 2pt
\pinlabel $A$ [c] at 26 61
\pinlabel $B$ [c] at 63 14
\endlabellist
\includegraphics[width=4.5cm]{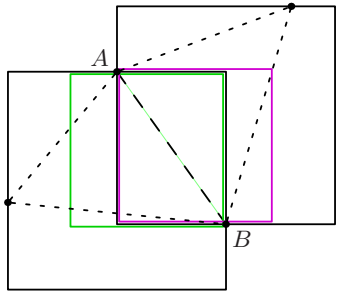}
\caption{The extremal squares circumscribed to a segment $[AB]$, and two intermediate squares (colored). Dotted: the Delaunay triangles containing $[AB]$.}
\label{bump}
\end{figure} 

The Delaunay polygons therefore define a cell decomposition of some region of~$\overline{S}$. This region is open and closed, because there are only finitely many Delaunay polygons in $\overline{S}$ (the diameter of $\overline{S}$ gives an upper bound on the possible sizes of singularity-free squares, so a compactness argument applies). Therefore, the Delaunay polygons give a cell decomposition (generically a triangulation, but possibly nonsimplicial) of $\overline{S}$ itself.
\end{proof}

\begin{remark} The ideal Delaunay decomposition of $S_t=(S, g_t)$ (obtained by removing the singularities of $\overline{S}$) varies with $t$. The changes occur at the values of~$t$ such that the decomposition contains a quadrilateral inscribed in a square $Q$ (or several such quadrilaterals). At such times $t$, the triangulation undergoes a \emph{diagonal exchange}: before $t$ the triangulation contains an edge connecting the singularities on the vertical sides of $Q$; after $t$ the triangulation contains an edge connecting the singularities on the horizontal sides of~$Q$.
\end{remark}

Interpreting such diagonal exchanges as (flattened) tetrahedra as in Figure~\ref{tetrahedron}, we obtain a so-called \emph{layered} ideal triangulation of $S\times \mathbb{R}$ which naturally descends to the quotient $S$-bundle $M_\varphi$, and is veering by construction if we just color edges in red or blue according to the sign of their slope in the translation surface $S$. Theorem~\ref{thm:veering} is proved.

An important result of \cite{Agol} is that there is at most one layered veering triangulation of $M_\varphi$, so it has to be the one constructed in Theorem \ref{thm:veering}.

\section{Normal forms of points of $\partial_\infty \Sigma$} \label{sec:normalforms}

In this section, $S$ is a half-translation surface with at least one puncture, no singularities (except at punctures), and capable of carrying a complete hyperbolic metric $h$, for which the punctures become cusps. The universal cover $\mathbb{H}^2$ of $(S,h)$ has a boundary at infinity which is a topological circle $\mathbb{S}$. This circle $\mathbb{S}$ is topologically independent of the choice of $h$, in the sense that if $h'$ is another hyperbolic metric on~$S$, then the identity map from $(S,h)$ to $(S,h')$ lifts to a self-homeomorphism of $\mathbb{H}^2$ which extends to a unique self-homeomorphism of $\mathbb{S}$. 

Alternatively we can obtain the circle $\mathbb{S}$ in the following way. The singular Euclidean surface $\overline{\Sigma}$ is Gromov-hyperbolic (quasi-isometric to the free group $\pi_1(S)$) and its boundary at infinity $\partial_\infty \overline{\Sigma}$ is a Cantor set. Moreover, $\partial_\infty \overline{\Sigma}$ carries a natural cyclic order induced by the orientation of the disk $\Sigma$, or its quotient $S$. The circle $\mathbb{S}$ is naturally identified with the quotient of the Cantor set $\partial_\infty \overline{\Sigma}$ under the equivalence condition $\sim$ that \emph{collapses any two points that are not separated by a third (distinct) point} for the cyclic order. Indeed, since any two such points are the attracting and repelling fixed points $\xi_c^+, \xi_c^- \in\partial_\infty(\pi_1 (S))$ of a peripheral element $c$ of $\pi_1(S)$, a holonomy representation $\rho$ of the hyperbolic metric $h$ on $S$ takes $c$ to a parabolic isometry $\rho(c)$ of $\mathbb{H}^2$ fixing a unique ideal point $\eta_c \in\partial_\infty\mathbb{H}^2$. The map $\{\xi_c^+, \xi_c^-\}\mapsto \eta_c$ extends naturally to a $\rho$-equivariant homeomorphism 
$$\psi:\partial_\infty(\pi_1(S))/\!\sim \: \tilde{\longrightarrow} \: \partial_\infty\mathbb{H}^2=\mathbb{S}.$$

In the proposition below, we describe the points of the circle $\mathbb{S}$ as ``normalized'' paths (possibly semi-infinite) in the singular Euclidean surface $\overline{\Sigma}$. The singular points of $\overline{\Sigma}$ are infinite branching points of the covering map towards the completion $\overline{S}$ of $S$: let $\Omega\in\overline{\Sigma}$ denote such a singular point, fixed throughout the paper.

\begin{proposition} \label{paths}
There exists a natural parameterization $\Psi$ of the circle $\mathbb{S}$ by the collection of all piecewise straight paths $\gamma$ in $\overline{\Sigma}$ starting at $\Omega$ and turning only at singularities (leaving angles $\geq \pi$ as they turn), possibly terminating at a singularity.

Given two distinct, nontrivial such paths $\gamma$ and $\gamma'$, the 3 ideal points $\Psi(\gamma)$, $\Psi(\gamma')$, $\Psi(\Omega)$ in $\partial_\infty\mathbb{H}^2$ (where the same symbol $\Omega$ is used to denote the ``trivial'' path that stays at $\Omega$) span an ideal triangle of $\mathbb{H}^2$. This triangle is clockwise oriented if and only if the trajectory of $\gamma'$ in $\overline{\Sigma}$ departs from that of $\gamma$ to the right, possibly after a finite common prefix. This must happen at a singularity which the paths may pass on either side, or stop at: see Figure~\ref{clockwise}.
\end{proposition}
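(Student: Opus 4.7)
The plan begins with a CAT(0) characterization of the singular flat surface $\overline{\Sigma}$. Since every singularity of $\overline{\Sigma}$ is a lift of a puncture of $S$ and hence an infinite branch point of the covering $\overline{\Sigma} \to \overline{S}$, all cone angles are $\geq 2\pi$; together with simple connectedness (which passes from $\Sigma$ to its metric completion $\overline{\Sigma}$), this makes $\overline{\Sigma}$ a complete CAT(0) geodesic space. Two consequences I would use throughout are: between any two points of $\overline{\Sigma}$ there is a unique geodesic, and the visual boundary $\partial_\infty\overline{\Sigma}$ is in canonical bijection with the set of geodesic rays issuing from the fixed basepoint $\Omega$. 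Moreover, the standard local description of geodesics in singular flat surfaces identifies the admissible paths of the proposition (piecewise straight, turning only at singularities, with both side-angles $\geq \pi$ at each turn) with the geodesic rays (if infinite) or segments (if terminating at a singularity) starting from $\Omega$.

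With this identification in hand, I would define $\Psi$ by combining the bijection between infinite admissible paths and $\partial_\infty\overline{\Sigma}$ with the quotient map $\partial_\infty\overline{\Sigma} \to \mathbb{S} = \partial_\infty\overline{\Sigma}/\!\sim$ recalled above the proposition, and extending to finite admissible paths by sending a path terminating at a singularity $q$ to the parabolic fixed point $\psi(\eta_c) \in \mathbb{S}$, where $c \in \pi_1(S)$ is the peripheral element stabilizing $q$. Surjectivity of $\Psi$ is then immediate by cases: every point of $\mathbb{S}$ is either non-parabolic, in which case the unique geodesic ray from $\Omega$ to the corresponding point of $\partial_\infty\overline{\Sigma}$ is an admissible infinite path mapping to it; or parabolic, in which case the unique geodesic segment $[\Omega, q]$ to the associated singularity $q$ is a finite admissible path mapping to it. Injectivity uses uniqueness of geodesics in CAT(0) spaces, together with the observation that distinct singularities have distinct peripheral stabilizers in $\pi_1(S)$ once $\Omega$ is fixed as a basepoint.

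For the ideal-triangle orientation statement, I would, given two distinct nontrivial admissible paths $\gamma$ and $\gamma'$, consider their maximal common initial subpath, which necessarily terminates at a singularity $s$---possibly $s = \Omega$ itself, possibly $s$ an interior turning point of one path, possibly $s$ the endpoint of $\gamma$ or of $\gamma'$. At $s$ the three ``directions'' involved (backwards along the common subpath toward $\Omega$, forwards along $\gamma$, forwards along $\gamma'$) inherit a cyclic order from the orientation of $\overline{\Sigma}$. Since the cyclic order on $\partial_\infty\overline{\Sigma}$ was defined precisely from the orientation of $\Sigma$, and $\psi$ is a $\rho$-equivariant homeomorphism preserving it, the cyclic order of $\Psi(\Omega),\Psi(\gamma),\Psi(\gamma')$ on $\partial_\infty\mathbb{H}^2$ matches the local cyclic order at $s$. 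A single sign check in a local model then establishes that $\gamma'$ departs to the right of $\gamma$ if and only if the ideal triangle is clockwise oriented.

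The main obstacle I anticipate is the parabolic case of bijectivity: one must identify the two-element $\sim$-classes in $\partial_\infty\overline{\Sigma}$ precisely with pairs of ``grazing'' rays flanking the infinite cone at each singularity, verifying that the two fixed points $\xi_c^\pm$ of a peripheral $c$ bound an arc of $\partial_\infty\overline{\Sigma}$ that contains no other Cantor point, and that every gap arises this way. A secondary technical point is the case analysis in the orientation statement when the divergence singularity $s$ is the terminus of $\gamma$ or of $\gamma'$: the terminating path contributes not an outgoing direction at $s$ but the vertex $s$ itself, and one has to verify separately that the convention $\Psi(\text{trivial path at }q) = \psi(\eta_c)$ still fits the local cyclic-order match, via a small perturbation argument that replaces the terminating path by a slightly longer admissible path grazing past $s$ on either side.
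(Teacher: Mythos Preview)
Your overall strategy coincides with the paper's: identify the admissible paths with the geodesics of the CAT(0) space $\overline{\Sigma}$, send infinite ones to non-parabolic points of $\mathbb{S}$ and finite ones to parabolic points, then read off the cyclic order locally at the point of divergence. The orientation argument you sketch is essentially the paper's, which also reduces to a local cyclic-order check (implemented there by pushing $\gamma,\gamma'$ off the singularities via small circular arcs so as to view them inside $\Sigma\simeq\mathbb{H}^2$).

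Where your write-up is thinner than the paper's is exactly the ``main obstacle'' you flag, and your proposed resolution of it is not quite right. You want to use the CAT(0) visual boundary $\partial_\infty\overline{\Sigma}$ together with the collapsing map to $\mathbb{S}$, and you suggest that the two-element $\sim$-classes should be the pairs of grazing rays at each singularity. But the peripheral fixed points $\xi_c^{\pm}$ are \emph{not} represented by geodesic rays from $\Omega$ at all: the peripheral element $c$ fixes the singularity $q$ and hence has bounded orbits in $\overline{\Sigma}$, so it has no attracting or repelling ray in the CAT(0) sense. The grazing rays through $q$ are perfectly good infinite geodesics and go to ordinary (non-parabolic) points of $\mathbb{S}$; they are not $\xi_c^{\pm}$. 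More fundamentally, $\overline{\Sigma}$ fails to be locally compact at its infinite-angle cone points, so the clean identification of its visual boundary with the Cantor set $\partial_\infty(\pi_1(S))$ that you (and the paper's preamble) invoke is delicate. The paper's actual proof sidesteps this by never using $\partial_\infty\overline{\Sigma}$: it chooses a polygonal fundamental domain $P$, views $\partial_\infty(\pi_1(S))$ concretely as infinite reduced sequences of adjacent copies of $P$, and then observes directly that a non-peripheral sequence is tracked by a unique infinite geodesic while a peripheral sequence (one that eventually spins around a singularity $\Omega'$) is represented by the finite geodesic $[\Omega,\Omega']$. This hands-on description yields bijectivity of $\Psi$ without needing to analyze the visual boundary of $\overline{\Sigma}$, and it is what your argument is missing.
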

\begin{figure}[h!]
\centering
\labellist
\small\hair 2pt
\pinlabel $\gamma'$ [c] at 63 31
\pinlabel $\gamma$ [c] at 61 45
\pinlabel $\gamma'$ [c] at 80 27
\pinlabel $\gamma$ [c] at 83 37
\pinlabel $\gamma'$ [c] at 97 39
\pinlabel $\gamma$ [c] at 120 39
\pinlabel $\gamma'$ [c] at 134 30
\pinlabel $\gamma$ [c] at 147 26
\pinlabel $\gamma'$ [c] at 168 45
\pinlabel $\gamma$ [c] at 161 30
\endlabellist
\includegraphics[width=11cm]{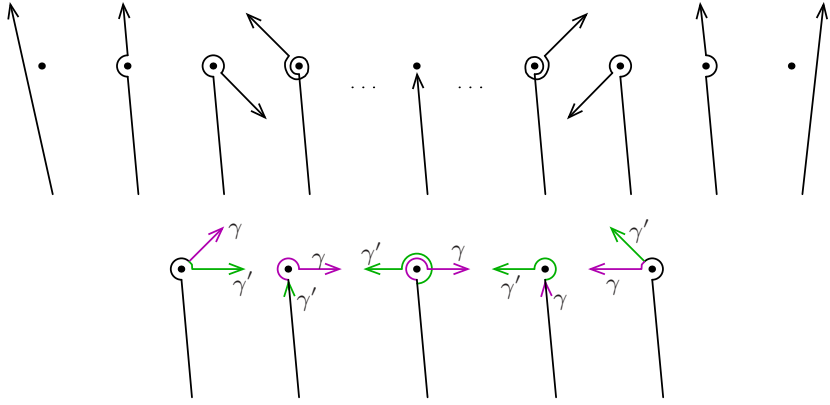}
\caption{Illustration of Proposition~\ref{paths}. \emph{Top}: a sequence of paths passing through a singularity and progressing clockwise in the space $\mathbb{S}$ of paths. \emph{Bottom}: at a singularity, a path $\gamma'$ (green) departs from another path $\gamma$ (purple) to the right after a nonempty common prefix (black), in all of 5 possible ways.}
\label{clockwise}
\end{figure} 
\begin{proof}[Proof of Proposition~\ref{paths}]
The map $\psi$ already identifies $\mathbb{S}$ with the quotient space $\partial_\infty(\pi_1(S))/\!\sim$. Let us identify the latter set with a class of paths starting at $\Omega$. 

We can find a connected, polygonal fundamental domain $P$ of $\overline{\Sigma}$ with all vertices at singularities, for example by taking an appropriate union of cells of one of the Delaunay cellulations of Section \ref{sec:triangulation}. Up to choosing a fixed connecting path between $\Omega$ and a lift of the basepoint of $S$, we can view $\partial_\infty(\pi_1(S))$ as the space of infinite sequences $(P_i)_{i\geq 0}$ of distinct copies of $P$ in $\overline{\Sigma}$ such that $\partial P_0$ contains $\Omega$ and each $P_i$ shares an edge with $P_{i+1}$.

If $\xi \in \partial_\infty(\pi_1(S))/\!\sim$ does not come from a peripheral fixed point, then we can see $\xi$ as a (unique) infinite path of polygons $(P_i)_{i\geq 0}$ as above, such that any singularity belongs to at most finitely many of the $P_i$. In particular the $P_i$ escape any compact set of $\overline{\Sigma}$. Since $\overline{\Sigma}$ is a CAT(0) space, there exists a unique, infinite geodesic path of the singular Euclidean surface $\overline{\Sigma}$, issued from $\Omega$, and following the $P_i$.

If $\xi \in \partial_\infty(\pi_1(S))/\!\sim$ does come from a peripheral fixed point, then we can still see $\xi$ as an infinite path of polygons $(P_i)_{i\geq 0}$, but there exists a (smallest) $i_0$ such that all the $P_i$ for $i\geq i_0$ share a certain vertex $\Omega'$. The other fixed point of the peripheral is obtained by replacing this suffix $(P_i)_{i\geq i_0}$, formed of a sequence of polygons turning around $\Omega'$ in one direction, by the sequence turning in the other direction. A pair $\{\xi^+, \xi^-\}$ of peripheral fixed points as above can thus naturally be identified with the geodesic path in $\overline{\Sigma}$ from $\Omega$ to $\Omega'$, following the polygons $P_0, \dots, P_{i_0}$ and \emph{terminating} at $\Omega'$. 

A local minimizing argument shows that the geodesics of $\overline{\Sigma}$ are exactly the piecewise straight curves $\gamma$ that turn only at singularities, with all turning angles being $\geq \pi$. By the above discussion, the space of finite (res.\ infinite) such geodesics $\gamma$ issued from $\Omega$ identifies with the unordered pairs (resp.\ singletons) in the domain $\partial_\infty(\pi_1(S))/\!\sim$ of the map $\psi$. Let us call $\Psi:\gamma \mapsto \text{(endpoint of $\gamma$)}\in\mathbb{S}$ the composition of this identification with $\psi$. By construction, $\Psi$ is bijective.

To check the statement on clockwise orientations, pick points $p\neq p'$ in $\mathbb{S}\smallsetminus \{\Psi(\Omega)\}$, and let $\mathcal{L}$ be any oriented curve in $\mathbb{H}^2$ from $\Psi(\Omega)$ to $p$. Then $p'$ lies to the right of $p$ as seen from $\Psi(\Omega)$ if and only if there exists a path from $p'$ to a point of $\mathcal{L}$ hitting $\mathcal{L}$ on the right side.

Next, let $\gamma$ and $\gamma'$ be the geodesics in $\overline{\Sigma}$ corresponding to $p$ and $p'$. These geodesics have a common prefix (possibly reduced to $\{\Omega\}$) before they diverge at some puncture. Let $\varepsilon>0$ be a small real number and $\mathcal{N}$ the $\varepsilon$-neighborhood of the singular set of $\overline{\Sigma}$. Deform $\gamma$ and $\gamma'$ to curves $\eta$ and $\eta'$ that coincide with $\gamma$ and $\gamma'$ outside the closure of $\mathcal{N}$, but follow circle arcs in $\partial \mathcal{N}$ at each turning singularity. We can view $\eta, \eta'$ as curves in the disk $\Sigma\simeq \mathbb{H}^2$ (escaping to parabolic fixed points if $\gamma, \gamma'$ terminate), and $\eta'$ leaves $\eta$ to the right if and only if $\gamma'$ leaves $\gamma$ to the right. The criterion above (with $\mathcal{L}=\eta$) gives the result.
\end{proof}
In the remainder of this paper, the word \emph{path} will usually refer to a geodesic path in $\overline{\Sigma}$, issued from the singular point $\Omega$. The topology on the space of paths is induced by its identification with $\mathbb{S}$. It may also be described thus: \emph{two paths are close if, when followed from $\Omega$, they stay close in $\overline{\Sigma}$ for a great amount of length}.

\section{The combinatorics of the Cannon-Thurston map $\overline{\iota}$} \label{sec:CT}

In this section we prove Theorem~\ref{thm:jordan} about the combinatorics of the Cannon-Thurston map $\overline{\iota}$. Our tools are Proposition~\ref{paths} for the description of the domain of~$\overline{\iota}$ (paths) with its cyclic order, and Fact~B for the fibers of $\overline{\iota}$.

As in the previous section, let $\Omega$ be a singularity of $\overline{\Sigma}$ (the metric completion of the universal cover of the flat punctured surface $S$), and let the circle $\mathbb{S}$ be the space of $\overline{\Sigma}$-geodesic paths $\gamma$ issued from $\Omega$. By Fact~B, the points $\gamma$ of $\mathbb{S}$ that have the same image as $\Omega$ under the map $\overline{\iota}$ (the ``$\overline{\iota}$-fiber of $\Omega$'') are $\Omega$ itself, and all the rays shooting out from $\Omega$ along a leaf of $\lambda^+$ or $\lambda^-$. Since $\Omega$ is a branching point of infinite order, these rays form a $\mathbb{Z}$-family, counting clockwise: for any integer $i$, the the $i$-th ray $\ell_i$ is vertical for $i$ even and horizontal for $i$ odd.

\subsection{Fibers of $\overline{\iota}$ and colors}
We apply a M\"obius map to normalize so that $\overline{\iota}(\Omega)=\infty \in \mathbb{P}^1\mathbb{C}$.

Every point $\gamma$ of $\mathbb{S}$ that is \emph{not} in the fiber of $\Omega$ 
falls inbetween $\ell_i$ and $\ell_{i+1}$ for a unique integer $i$, which is even (resp.\ odd) if and only if the initial segment of (the geodesic representative of) $\gamma$ has positive (resp.\ negative) slope. We say that the directions between $\ell_i$ and $\ell_{i+1}$ form a \emph{quadrant} at $\Omega$. There are countably many quadrants.

Therefore, the rule that $\overline{\iota}$ changes color at each passage through $\infty$ means that the color of $\overline{\iota}(\gamma)$ is determined by the sign of the slope of the initial segment of~$\gamma$, or equivalently, by the parity of the quadrant that contains this initial segment.

To understand the interfaces between the two colors in the image of $\overline{\iota}$, we must therefore understand which $\overline{\iota}$-fibers (described by Fact~B) contain two paths $(\gamma, \gamma')$ whose initial segments belong to different quadrants at $\Omega$. In general, let $Q(\gamma)$ denote the quadrant containing the initial segment of $\gamma$. 

\begin{definition} \label{def:hook}
For any $i\in\mathbb{Z}$ and $t\in \overline{\mathbb{R}^+}:=[0,+\infty]$, let $\ell_i$ be the $i$-th leaf of $\lambda^\pm$ issued from $\Omega$ for the clockwise order.

The \emph{right} (resp.\ \emph{left}) \emph{$t$-hook along $\ell_i$}, written $\Gamma_i^+(t)$ (resp.\ $\Gamma_i^-(t)$), is the geodesic straightening of the path obtained by following the leaf $\ell_i$ from $\Omega$, for a length $t$, then making a right (resp.\ left) turn to follow a leaf of the other foliation --- all the way to infinity, or to another singularity. 
 
In particular, 
\begin{equation} \label{collapse}
 \Gamma_i^{\pm}(0)=\ell_{i\pm 1}~\text{ and }~\Gamma_i^\pm(+\infty)=\ell_i
 ~\text{ and }~ \overline{\iota}(\Gamma_i^+(t))=\overline{\iota}(\Gamma_i^-(t))
\end{equation}
where the last identity follows from Fact~B. Also, $Q(\Gamma_i^+(t))$ and $Q(\Gamma_i^-(t))$ are consecutive quadrants for any $0<t<+\infty$. 
\end{definition}

\begin{definition}
A singularity $p$ of $\Sigma$ is called a \emph{ruling} singularity if the geodesic from $\Omega$ to $p$ consists of a single straight Euclidean segment $[\Omega, p]$, equal to the diagonal of a singularity-free rectangle. We also call $[\Omega, p]$ a \emph{ruling segment}.
\end{definition}

\begin{proposition} \label{prop:shootoff}
Let $\mathcal{F}$ be a fiber of $\overline{\iota}$, not containing $\Omega$. One of the following holds:
\begin{itemize}
\item $Q(\mathcal{F})$ is a single quadrant;
\item $|Q(\mathcal{F})|=2$ and there exists a unique $(i,t)\in\mathbb{Z}\times \mathbb{R}_+^*$ such that $\Gamma_i^\pm(t) \in \mathcal{F}$;
\item $|Q(\mathcal{F})|=3$ and there exists a unique $(i,t,t')\in\mathbb{Z}\times \mathbb{R}_+^* \times \mathbb{R}_+^*$ such that $\{\Gamma_i^\pm(t), \Gamma_{i+1}^\pm(t')\} \subset \mathcal{F}$; this happens exactly when $\mathcal{F}$ contains a ruling segment $[\Omega,p]$, inscribed in a singularity-free rectangle of sidelengths $t, t'$.
\end{itemize}
\end{proposition}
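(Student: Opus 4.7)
I will classify fibers $\mathcal{F}$ of $\overline{\iota}$ that do not contain $\Omega$ by tracking how the leaf chains produced by Fact~B must traverse the rays $\ell_i$ emanating from $\Omega$. The key preliminary is a \emph{crossing lemma}: a leaf $L$ of $\lambda^\mp$ that meets the ray $\ell_i$ transversely at distance $t\in(0,+\infty)$ from $\Omega$ has its two endpoints in $\mathbb{S}$ equal to $\Gamma_i^+(t)$ and $\Gamma_i^-(t)$ (essentially by Definition~\ref{def:hook}), and these lie in the adjacent quadrants $Q_i$ and $Q_{i-1}$. Conversely, a leaf whose two endpoints straddle distinct quadrants at $\Omega$ must cross some $\ell_i$, since the $\ell_i$ are the quadrant boundaries.

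Given such a fiber $\mathcal{F}$ with $|Q(\mathcal{F})|\geq 2$, I pick $\gamma,\gamma'\in\mathcal{F}$ in distinct quadrants and connect them by a minimal leaf chain; along the chain the quadrant must jump at some link, whose leaf is then a crossing leaf, producing a hook pair $\{\Gamma_i^\pm(t)\}\subset\mathcal{F}$. If $|Q(\mathcal{F})|\geq 3$, applying the same reasoning to a further pair of distinct quadrants yields a second hook pair $\{\Gamma_j^\pm(t')\}\subset\mathcal{F}$ with $j\neq i$. The two corresponding crossing leaves lie in opposite foliations; for the two hook pairs to be chain-linked in a fiber avoiding $\Omega$, at least one of the crossing leaves must terminate at a singularity $p$, and $p$ must be common to both crossing leaves (which meet in $\overline{\Sigma}$ at the point $p$). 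The coordinates of $p$ along $\ell_i$ and $\ell_j$ then force $|j-i|=1$, and minimality of the chain forces the rectangle $\Omega p$ to be singularity-free (an interior singularity would supply a strictly smaller, distinct ruling). Thus $[\Omega,p]$ is a ruling segment of sidelengths $(t,t')$, two of the four hooks $\Gamma_i^\pm(t),\Gamma_{i+1}^\pm(t')$ coincide with it, and the fiber spans exactly three consecutive quadrants.

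Finally, to rule out $|Q(\mathcal{F})|\geq 4$ and close the classification: four quadrants would produce hooks on three consecutive rays $\ell_i,\ell_{i+1},\ell_{i+2}$ and hence two ruling singularities $p_1,p_2\in\mathcal{F}$. Both must lie on a common leaf of $\lambda^\mp$ crossing $\ell_{i+1}$ at distance $t'$, but such a leaf carries at most one singularity without violating the singularity-freeness of one of the rectangles $\Omega p_k$, so $p_1=p_2$ --- a contradiction. Uniqueness of $(i,t)$ (resp.\ $(i,t,t')$) follows because the quadrant of a hook recovers $i$, the crossing distance recovers $t$ (resp.\ $t'$), and the ruling singularity $p$ is rigidly determined. \textbf{The main obstacle} is the 3-quadrant analysis above, where one must verify that the linking chain collapses to a single singularity $p$ with $\Omega p$ singularity-free; this combines the common-prefix rigidity of Proposition~\ref{paths} with the observation that any interior singularity of $\Omega p$ would yield a strictly smaller ruling and a different fiber, contradicting chain minimality.
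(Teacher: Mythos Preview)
Your approach is genuinely different from the paper's. The paper exploits directly the trichotomy $|\mathcal{F}|\in\{1,2,\infty\}$ from Fact~B: in the $|\mathcal{F}|=2$ case it bounds the total turning of the two geodesic sides of the triangle $(\Omega,\gamma,\gamma')$ by $\pi$, forcing the merge point to be $\Omega$ itself; in the $|\mathcal{F}|=\infty$ (star at $p$) case it lists the four extensions $\gamma',\gamma'',\dot\gamma,\ddot\gamma$ whose straightenings avoid $p$ and checks their initial quadrants one by one, finding that only $\gamma''$ can give a third quadrant, and only when $[\Omega,p]$ is ruling. Your route via a crossing lemma and hook pairs is more topological and avoids this enumeration, which is appealing.

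However, there is a genuine gap in your three-quadrant step. You assert that ``minimality of the chain forces the rectangle $\Omega p$ to be singularity-free (an interior singularity would supply a strictly smaller, distinct ruling)''. This is not a valid argument: an interior singularity $q$ lies in a \emph{different} fiber (the star of $q$), so it does not appear in any leaf chain for $\mathcal{F}$ and cannot be excluded by chain minimality. What actually forces singularity-freeness is this: once you know that a horizontal half-leaf from $p$ reaches $\ell_i$ and a vertical half-leaf from $p$ reaches $\ell_{i+1}$, these two segments together with the initial segments of $\ell_i$ and $\ell_{i+1}$ form a closed quadrilateral in the CAT(0) surface $\overline\Sigma$ with four right angles, hence total boundary turning $2\pi$; by Gauss--Bonnet (cone points contribute negatively) the enclosed disk is flat, i.e.\ singularity-free. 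Equivalently, in the paper's language, if the vertical half-leaf $l_{s_0}$ from $p$ fails to cross $\ell_{i+1}$ (the non-ruling case), then the straightening $\gamma''$ never leaves quadrant $i$, so the third quadrant does not arise. Your plan would be complete with this correction; as written, the crucial implication ``two hook pairs $\Rightarrow$ ruling rectangle'' is asserted for the wrong reason. (A smaller issue: you claim the two crossing leaves lie in opposite foliations \emph{before} deriving $|j-i|=1$, which is circular; the parity of $j-i$ is what determines the foliations.)
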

\begin{proof}
By Fact~B, a fiber $\mathcal{F}$ may have cardinality $1$, $2$, or $\infty$. If $|\mathcal{F}|=1$ we are in the first case. 

\begin{figure}[h!]
\centering
\labellist
\small\hair 2pt
\pinlabel $\gamma'$ [c] at 55 44
\pinlabel $\gamma$ [c] at 110 47
\pinlabel $\Omega$ [c] at 81 15
\pinlabel $p$ [c] at 89 11
\pinlabel $x$ [c] at 71 54
\endlabellist
\includegraphics[width=11cm]{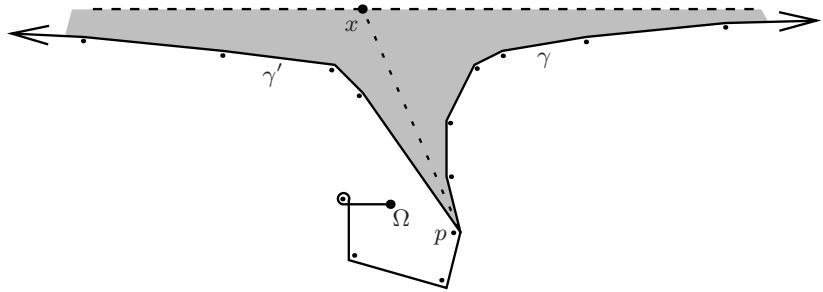}
\caption{An ideal triangle in $\overline{\Sigma}$, with one vertex $\Omega$ and the opposite edge equal to a horizontal leaf. We also draw the geodesic from $\Omega$ to $x$ (a nonsingular point on that leaf). In Figures~\ref{merge}--\ref{shootoff}--\ref{merge-2}--\ref{merge-3}, the shaded area is a portion of $\overline{\Sigma}$ containing no singularities.}
\label{merge}
\end{figure} 
Suppose now that $\mathcal{F}$ contains exactly two elements $\gamma, \gamma'$. By Fact~B, the paths $\gamma$ and $\gamma'$ are then geodesic representatives of two paths 
that coincide up to a point $x$ at which they shoot off on opposite rays of a leaf of $\lambda^+$ or $\lambda^-$ containing no singularity. We may assume this leaf is horizontal. The ideal triangle spanned by the endpoints of $\gamma, \gamma'$ and by their common origin $\Omega$ has one fully horizontal edge, and two edges whose directions (followed towards $\Omega$) depart monotonically from horizontal until they merge at a singular point $p$, and then continue on together (with arbitrary changes of direction) until the point $\Omega$. See Figure~\ref{merge}. 
\begin{figure}[h!]
\centering
\labellist
\small\hair 2pt
\pinlabel $\gamma'$ [c] at 58 28
\pinlabel $\gamma$ [c] at 108 31
\pinlabel $\ell_i$ [c] at 82 50
\pinlabel $p=\Omega$ [c] at 76 2
\pinlabel $t$ [c] at 82 25
\pinlabel $x$ [c] at 87 39
\endlabellist
\includegraphics[width=10.5cm]{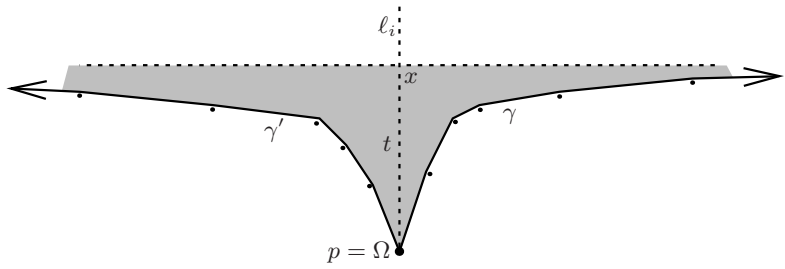}
\caption{Illustration of Proposition~\ref{prop:shootoff} when $|\mathcal{F}|=2$.}
\label{shootoff}
\end{figure} 
The total change of direction of~$\gamma$ between~$p$ and its endpoint at infinity of~$\overline{\Sigma}$, plus the total change of direction of~$\gamma'$ between~$p$ and its endpoint at infinity of~$\overline{\Sigma}$, is less than~$\pi$.  If the initial quadrants $Q(\gamma)$ and $Q(\gamma')$ at the point $\Omega$ are distinct, then they are consecutive and the only possibility is that $p$ coincides with $\Omega$ and the segment $[p,x]$ can be taken vertical along a leaf $\ell_i$. We then have the situation of Figure~\ref{shootoff}: the number $t$ is just the vertical distance $px$, and $\mathcal{F}=\{\Gamma_i^+(t), \Gamma_i^-(t)\}=\{\gamma, \gamma'\}$; the second case of Proposition~\ref{prop:shootoff} holds.

It remains to treat the case $|\mathcal{F}|=\infty$. By Fact~B, $\mathcal{F}$ then contains exactly: 
\begin{itemize}
 \item one path $\gamma$ terminating at a singularity $p$, and
 \item all the paths $\hat{\gamma}$ obtained from $\gamma$ by tacking on a leaf $l_s$ of $\lambda^+$ or $\lambda^-$ issued from $p$ (here $s$ ranges over $\mathbb{Z}$, the order being clockwise as seen from $p$).
\end{itemize}
\begin{figure}[h!]
\centering
\labellist
\small\hair 2pt
\pinlabel $\gamma'$ [c] at 36 41
\pinlabel $\gamma$ [c] at 111 45
\pinlabel $\gamma''$ [c] at 117 18
\pinlabel $\ddot{\gamma}$ [c] at 143 43
\pinlabel $\dot{\gamma}$ [c] at 112 65
\pinlabel $\Omega$ [c] at 76 6
\pinlabel $p$ [c] at 128 54
\pinlabel $t$ [c] at 78 34
\pinlabel $\ell_i$ [c] at 81 62
\pinlabel $x$ [c] at 78 48
\pinlabel $l_{s_0}$ [c] at 130 23
\pinlabel $l_{s_0+1}$ [c] at 95 55
\endlabellist
\includegraphics[width=10.5cm]{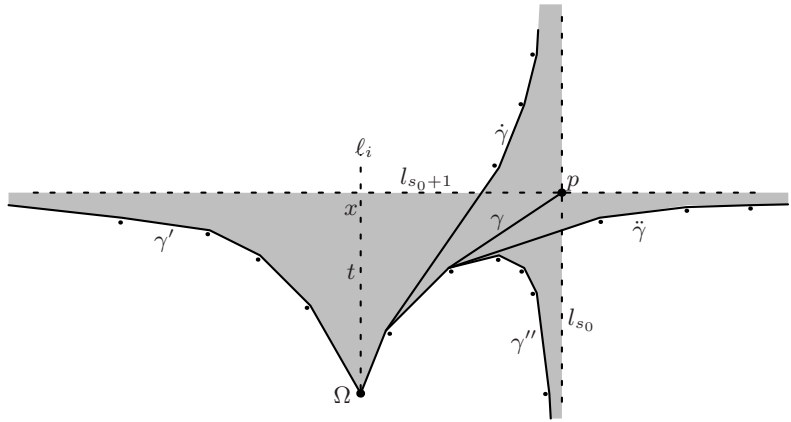}
\caption{Possibilities for the path $\hat{\gamma}$ include $\gamma', \gamma'', \dot{\gamma}, \ddot{\gamma}$.}
\label{merge-2}
\end{figure} 
If $|Q(\mathcal{F})|>1$, then in particular one of these paths, $\gamma'$, satisfies $Q(\gamma')\neq Q(\gamma)$.
It follows that $\{\gamma, \gamma'\}=\{\Gamma_i^+(t), \Gamma_i^-(t)\}$ for some $(i,t)\in \mathbb{Z}\times \mathbb{R}_+^*$: the argument is similar to the case $|\mathcal{F}|=2$ above, except that the horizontal leaf through $x$ terminates at~$p$ (Figure~\ref{merge-2}, ignoring for the moment 
the paths labelled $\dot{\gamma}, \ddot{\gamma}, \gamma''$).

It remains to find the quadrants $Q(\hat \gamma)$ for the other elements $\hat\gamma$ of the fiber~$\mathcal{F}$, obtained by tacking on to $\gamma$ a leaf $l_s$ of $\lambda^\pm$ issued from $p$.
Only four paths $\hat\gamma \in \mathcal{F}\smallsetminus \{\gamma\}$ have geodesic representatives that \emph{do not go} through $p$: these are the paths $\gamma'$, $\gamma''$, $\dot{\gamma}$, $\ddot{\gamma}$ shown in Figure~\ref{merge-2}. These possibilities correspond to tacking on to $\gamma$ any one of four consecutive leaves $l_{s_0-1}, l_{s_0}, l_{s_0+1}, l_{s_0+2}$:
\begin{itemize} 
\item either a boundary leaf (say $l_{s_0+1}$ or $l_{s_0}$) of the quadrant at $p$ containing the last segment of $\gamma$ (this yields geodesic representatives $\gamma'$ and $\gamma''$);
\item or one of the next closest leaves $l_{s_0+2}, l_{s_0-1}$ (this yields $\dot{\gamma}$ and $\ddot{\gamma}$).
\end{itemize}

All other possible $\hat \gamma$ go through $p$, and in particular satisfy $Q(\hat \gamma)=Q(\gamma)$. Closer inspection shows that actually $Q(\dot{\gamma})$ and $Q(\ddot{\gamma})$ are equal to $Q(\gamma)$, too (see Figure\ref{merge-2}).
The remaining possibility, $\hat \gamma=\gamma''$, can give two outcomes: 
\begin{itemize}
 \item If there is no ruling segment $[\Omega, p]$ (Figure~\ref{merge-2}), then $Q(\gamma'')=Q(\gamma)$ and we are still in the second case of Proposition~\ref{prop:shootoff}.
 \item If $[\Omega, p]$ \emph{is} a ruling segment, then $Q(\gamma'')\neq Q(\gamma)$. This situation is portrayed in Figure~\ref{merge-3}, and corresponds exactly to the third case of Proposition~\ref{prop:shootoff}.
\end{itemize}
\begin{figure}[h!]
\centering
\labellist
\small\hair 2pt
\pinlabel $\gamma'$ [c] at 36 72
\pinlabel {$\hat{\gamma} = \gamma''$} [c] at 97 25
\pinlabel $\gamma$ [c] at 110 71
\pinlabel $\Omega$ [c] at 76 37
\pinlabel $p$ [c] at 128 85
\pinlabel $\ell_i$ [c] at 81 92
\pinlabel $t$ [c] at 78 61
\pinlabel $\ell_{i+1}$ [c] at 137 38
\pinlabel $t'$ [c] at 105 41
\pinlabel $l_{s_0}$ [c] at 130 51
\pinlabel $l_{s_0+1}$ [c] at 95 86
\endlabellist
\includegraphics[width=11cm]{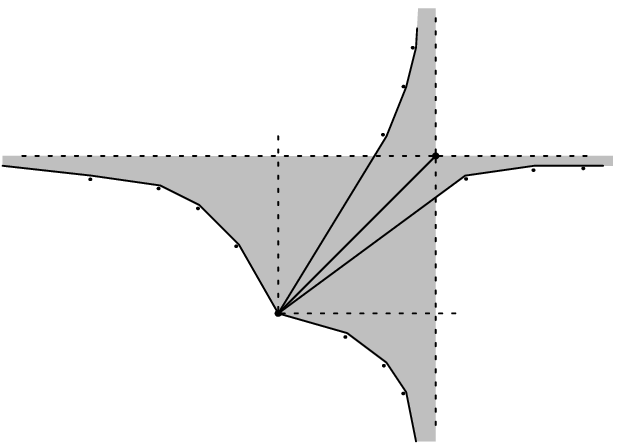}
\caption{When $[\Omega, p]$ is a ruling segment.}
\label{merge-3}
\end{figure} 
This concludes the proof of Proposition~\ref{prop:shootoff}.
\end{proof}

\subsection{Color interfaces of the Cannon-Thurston map}
We will use Proposition~\ref{prop:shootoff} to prove Theorem~\ref{thm:jordan} concerning the combinatorics of the Cannon-Thurston map $\overline{\iota}$. Here is a first step.

\begin{proposition} \label{prop:jordan1}
Recall $\overline{\mathbb{R}^+}:=[0,+\infty]$.
For all $i\in\mathbb{Z}$, the map 
$$\begin{array}{rrcl} 
& \overline{\mathbb{R}^+} & \longrightarrow & \mathbb{P}^1\mathbb{C} \\ 
J_i: & t & \longmapsto & \overline{\iota}(\Gamma^+_i(t)) 
\end{array}$$ 
is continuous, injective on $[0,+\infty)$, and $J_i(\overline{\mathbb{R}^+})$ is a Jordan curve.
\end{proposition}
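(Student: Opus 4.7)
The plan is to first settle the boundary values from \eqref{collapse}: since $\Gamma_i^+(0)=\ell_{i+1}$ and $\Gamma_i^+(+\infty)=\ell_i$ both lie in $\overline{\iota}^{-1}(\infty)$, we immediately get $J_i(0)=J_i(+\infty)=\infty$. Then I would prove continuity and injectivity of $J_i$ separately, and finally invoke compactness to upgrade an injective continuous circle map into a Jordan curve.

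\textbf{Continuity.} I would first check that $t\mapsto \Gamma_i^+(t)$ is continuous from $\overline{\mathbb{R}^+}$ into $\mathbb{S}$, using the description of the topology on $\mathbb{S}$ given at the end of Section~\ref{sec:normalforms} (two paths are close when they stay close for a great amount of length). For $t\in(0,+\infty)$, perturbing $t$ slightly moves the turning point a short distance along $\ell_i$, so the geodesic straightening shares an arbitrarily long initial segment with the unperturbed one. At $t=0$, a small right hook along $\ell_i$ has endpoint on a leaf parallel to $\ell_{i+1}$ and very close to it, so its geodesic straightening tracks $\ell_{i+1}$ on longer and longer intervals as $t\to 0^+$. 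Symmetrically, at $t=+\infty$ the hook literally coincides with $\ell_i$ over the first distance $t$ before turning, so $\Gamma_i^+(t)\to \ell_i$ in $\mathbb{S}$. Composing with the continuous Cannon--Thurston map $\overline{\iota}$ (Fact~B) gives continuity of $J_i$ on $\overline{\mathbb{R}^+}$.

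\textbf{Injectivity on $[0,+\infty)$.} I would separate two cases. If $t_1=0$ and $J_i(t_2)=\infty$ for some $t_2>0$, then $\Gamma_i^+(t_2)$ would lie in $\overline{\iota}^{-1}(\infty)$, hence (by Fact~B) would be one of the rays $\ell_j$. But for $t_2>0$ the straightened hook is a geodesic ending on a leaf \emph{strictly offset} from $\Omega$'s own leaves, so it cannot equal any $\ell_j$ --- contradiction. If $0<t_1<t_2<+\infty$, I would invoke Proposition~\ref{prop:shootoff}: the $\overline{\iota}$-fiber of $\Gamma_i^+(t_1)$ is either $\{\Gamma_i^+(t_1),\Gamma_i^-(t_1)\}$ (two-element case) or $\{\Gamma_i^{\pm}(t_1),\Gamma_{i\pm1}^{\pm}(t_1')\}$ (ruling-segment case). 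In either case this fiber contains \emph{at most one} right hook along $\ell_i$, so $\Gamma_i^+(t_2)$ cannot belong to it.

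\textbf{Conclusion.} The map $J_i:[0,+\infty]\to\mathbb{P}^1\mathbb{C}$ is continuous, with $J_i(0)=J_i(+\infty)=\infty$, and injective on $[0,+\infty)$. It therefore factors through a continuous injection of the quotient circle $S^1=[0,+\infty]/(0\sim+\infty)$ into the sphere $\mathbb{P}^1\mathbb{C}$; by compactness this is a homeomorphism onto its image, so $J_i(\overline{\mathbb{R}^+})$ is a Jordan curve. The main obstacle I anticipate is the continuity at the two endpoints $t=0$ and $t=+\infty$: the straightened hook must be controlled geometrically along the leaf $\ell_{i+1}$ (resp.\ $\ell_i$), and one has to be careful that no distant singularity disrupts the picture --- which is exactly where the ``stays close for a long length'' formulation of the $\mathbb{S}$-topology does the work.
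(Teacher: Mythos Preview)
Your continuity argument has a genuine gap. You claim that $t\mapsto \Gamma_i^+(t)$ is continuous as a map into $\mathbb{S}$, but this is false at the (countably many) values of $t$ for which the perpendicular leaf, followed after the turn, runs into a singularity $p$---i.e.\ precisely when $\Gamma_i^+(t)$ is a \emph{finite} path terminating at $p$. At such a $t$, perturbing to $t'<t$ (say) makes the hook follow a nearby leaf that shoots \emph{past} $p$ and continues to infinity; its geodesic straightening does not agree with the finite segment $\Gamma_i^+(t)$ beyond any fixed length, but rather converges to a different point of $\mathbb{S}$, namely the path obtained from $\Gamma_i^+(t)$ by tacking on a leaf of $\lambda^\mp$ at $p$ making angle $\pi$ on the appropriate side. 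The left and right limits $\Gamma(t^-)$, $\Gamma(t^+)$ are two distinct points of $\mathbb{S}$, both different from $\Gamma(t)$. So $t\mapsto \Gamma_i^+(t)$ is genuinely discontinuous there, and you cannot simply compose with $\overline{\iota}$.

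What rescues the argument---and what the paper does explicitly---is that $\Gamma(t)$, $\Gamma(t^-)$ and $\Gamma(t^+)$ all lie in the same $\overline{\iota}$-fiber (that of the singularity $p$), by Fact~B. Hence $J_i=\overline{\iota}\circ\Gamma_i^+$ is continuous at $t$ even though $\Gamma_i^+$ is not. You need to insert this case analysis. Your treatment of the endpoints $t=0,+\infty$ and your injectivity argument via Proposition~\ref{prop:shootoff} are essentially the same as the paper's (though note that in the injectivity step the fiber need not literally equal $\{\Gamma_i^\pm(t_1)\}$---it may be infinite---but the uniqueness of $(i,t)$ asserted in Proposition~\ref{prop:shootoff} is what you actually use).
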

(In this proposition, the image of $J_i$ is the curve labelled ``$J_i$'' in Figure~\ref{furrow}. 
The $t$-hook $\Gamma_i^+(t)$ in the definition of $J_i$ could be replaced by $\Gamma_i^-(t)$, due to \eqref{collapse} in Definition \ref{def:hook}.)
\begin{proof}
We first prove continuity. To fix ideas, supose the leaf $\ell_i$ shoots off from $\Omega$ vertically, upwards. We write $\Gamma(t)$ for $\Gamma_i^+(t)$.

If $t\in (0,+\infty)$ and $\Gamma(t)$ does not terminate on a singularity, then the geodesic representative of $\Gamma(t)$ makes infinitely many turns at singularities $p_1, p_2, \dots$. Continuity of $J_i$ at $t$ then follows from continuity of $\overline{\iota}$: indeed, for any integer $N$, if $t'$ is close enough to $t$ then the geodesic representative of $\Gamma(t')$ will coincide with that of $\Gamma(t)$ at least up to $p_N$. 

If $t\in(0,+\infty)$ and $\Gamma(t)$ terminates on a singularity $p$, consider $t'$ very close to~$t$. Suppose first that $t'<t$. Let $\Gamma(t^-)$ be the path obtained from $\Gamma(t)$ by tacking on a leaf of $\lambda^\mp$ making an angle $\pi$ with $\Gamma(t)$ \emph{below} $p$. Since $\overline{\iota}(\Gamma(t))=\overline{\iota}(\Gamma(t^-))$ by Fact~B, it is enough to prove that $\Gamma(t')$ approaches $\Gamma(t^-)$ in the space of paths as $t'$ approaches $t$ from below. This is the case, because the geodesic straightening of $\Gamma(t^-)$ again goes through infinitely many singularities $p_1\ p_2,\dots$ (\emph{not} including~$p$), and agrees with that of $\Gamma(t')$ up to any given $p_N$ provided $t-t'>0$ is small enough.

In the case $t'>t$, define similarly $\Gamma(t^+)$, the path obtained from $\Gamma(t)$ by tacking on a leaf of $\lambda^\mp$ making an angle $\pi$ with $\Gamma(t)$ \emph{above} $p$. This time, $p$ is the final turn of $\Gamma(t^+)$. Since $\overline{\iota}(\Gamma(t))=\overline{\iota}(\Gamma(t^+))$ by Fact~B, it is enough to prove that $\Gamma(t')$ approaches $\Gamma(t^+)$ as $t'$ approaches $t$ from above. This holds true because the next turn of $\Gamma(t')$ after $p$ lies arbitrarily far out in the horizontal direction if $t'-t>0$ is small enough.

If $t=0$, the argument is similar to the case $t'>t$ just treated. If $t=+\infty$, just observe that for very large $t'$, the geodesic straightening of $\Gamma(t')$ starts with an arbitrarily long, nearly vertical segment,
so again $\Gamma(t')$ approaches $\Gamma(+\infty)=\ell_i$ in the space of paths as $t'\rightarrow +\infty$.

Next, the identity $J_i(0)=J_i(+\infty)=\overline{\iota}(\Omega)$ is clear by Fact~B. Finally, we check the injectivity properties of $J_i$. Note that, after straightening, $\Gamma(t)$ and the other $t$-hook $\Gamma'(t)$ along $\ell_i$ (with the same initial segment) start into different quadrants: $Q(\Gamma(t))\neq Q(\Gamma'(t))$. Therefore, Proposition~\ref{prop:shootoff} applies, particularly the uniqueness of~$t$. 
\end{proof}

\subsection{Subdivision of quadrants}
In a given quadrant at $\Omega$, bounded by rays $\ell_i$ and $\ell_{i+1}$, the ruling singularities $\{p^i_s\}_{s\in\mathbb{Z}}$ form a naturally ordered $\mathbb{Z}$-sequence, with vertical and horizontal coordinates varying monotonically in opposite directions. To see this, one may for example consider for every $t>0$ the initial length-$t$ segment of the leaf $\ell_i$, and push this segment in the direction of $\ell_{i+1}$ until it bumps into a singularity. The area swept out is a rectangle, and the union of all these rectangles for all $t>0$ forms a ``staircase'', the $p^i_s$ being the turning points at the back of each stair. See Figure~\ref{staircase}. 
The indexing convention is such that the path $[\Omega, p^i_s]$ converges to $\ell_{i+1}$ (resp.\ $\ell_i$) as $s\rightarrow +\infty$ (resp.\ $s\rightarrow -\infty$). This defines $s$ only up to an additive shift depending on $i$, but we make no attempt to harmonize these shifts (i.e.\ any choice will do).
\begin{figure}[h!]
\centering
\labellist
\small\hair 2pt
\pinlabel $\Omega$ [c] at -2 2
\pinlabel $\ell_i$ [c] at -1 60
\pinlabel $\ell_{i+1}$ [c] at 110 2
\pinlabel $p^i_{s}$ [c] at 24 48
\pinlabel $p^i_{s+1}$ [c] at 49 26
\pinlabel $p^i_{s+2}$ [c] at 74 12
\pinlabel {$\mathcal{I}^i_{s+1}$} [c] at 46 13
\pinlabel {$\mathcal{I}^i_s$} [c] at 28 32
\pinlabel {$\mathcal{I}^i$} [c] at 72 58
\endlabellist
\includegraphics[width=7cm]{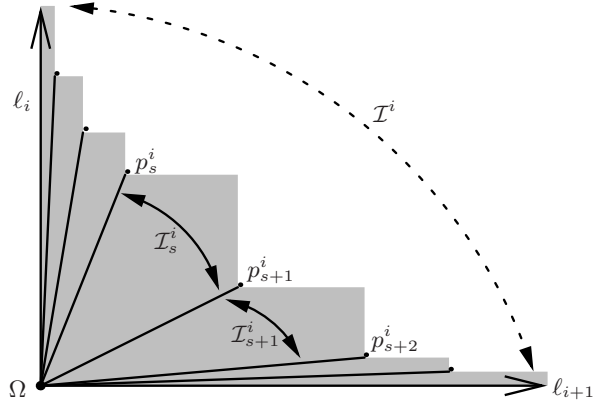}
\caption{A staircase in the $i$-th quadrant, with ruling segments $[\Omega, p_s]$. We mark the directions for initial segments of paths in the interval $\mathcal{I}^i_s$ of $\mathbb{S}$, as well as $\mathcal{I}^i=\bigcup_{s\in\mathbb{Z}}\mathcal{I}^i_s$.}
\label{staircase}
\end{figure} 
\begin{definition}
Inside the circle $\mathbb{S}$ of geodesic paths of $\overline{\Sigma}$ issued from $\Omega$, we let:
\begin{itemize}
 \item $\mathcal{I}^i$ be the closed interval of paths whose initial segment falls between the leaves $\ell_i$ and $\ell_{i+1}$;
 \item $\mathcal{I}^i_s\subset \mathcal{I}^i$ be the closed interval of paths whose initial segment falls between the ruling segments $[\Omega, p^i_s]$ and $[\Omega, p^i_{s+1}]$.
\end{itemize}
\end{definition}
\begin{remark} \label{boustro}
An important feature is that the order on the $\mathcal{I}^i_s$ induced by the cyclic order on $\mathbb{S}$ is the lexicographic order on pairs $(i,s)$. In particular, fixing $i\in\mathbb{Z}$,
\begin{itemize}
\item $\Gamma_i^-(t)\in \mathcal{I}^{i-1}_s$ for an index $s$ that is a \emph{nondecreasing} function of $t$;
\item $\Gamma_i^+(t)\in \mathcal{I}^{i}_s$ for an index $s$ that is a \emph{nonincreasing} function of $t$.
\end{itemize}
\end{remark}
Recall  the maps $J_i=\overline{\iota}\circ \Gamma_i^\pm :\overline{\mathbb{R}^+}\rightarrow \mathbb{P}^1\mathbb{C}$ from Proposition \ref{prop:jordan1}.
\begin{proposition} \label{prop:jordan2}
For $(i,t)$ and $(i', t')$ distinct elements of $\mathbb{Z} \times \overline{\mathbb{R}^+}$, the relationship $J_i(t)=J_{i'}(t')$ holds if and only if:
\begin{itemize}
 \item $\{t,t'\}\subset \{0,+\infty\}$, or
 \item $|i-i'|=1$ and the quadrant between $\ell_i$ and $\ell_{i'}$ contains a ruling singularity $p$ at coordinates $(t,t')$.
\end{itemize}
\end{proposition}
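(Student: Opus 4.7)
The plan is to split on whether the common value $J_i(t)=J_{i'}(t')$ equals $\infty$. With our normalization $\overline{\iota}(\Omega) = \infty$, Fact~B gives $\overline{\iota}^{-1}(\infty) = \{\Omega\}\cup\{\ell_j\}_{j\in\mathbb{Z}}$. The endpoint identities $\Gamma_i^{\pm}(0)=\ell_{i\pm 1}$ and $\Gamma_i^{\pm}(+\infty)=\ell_i$ from Definition~\ref{def:hook}, combined with the fact that for $t\in(0,+\infty)$ the straightened hook $\Gamma_i^+(t)$ has its initial direction in the interior of the quadrant $\mathcal{I}^i$ (so it is neither $\Omega$ nor a leaf ray), show that $J_i(t)=\infty$ iff $t\in\{0,+\infty\}$. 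This immediately gives the equivalence $J_i(t)=J_{i'}(t')=\infty\iff \{t,t'\}\subset\{0,+\infty\}$, handling the first bullet.

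Now suppose $J_i(t)=J_{i'}(t')\neq \infty$, so $t,t'\in (0,+\infty)$. If $i=i'$ then the injectivity of $J_i$ on $[0,+\infty)$ from Proposition~\ref{prop:jordan1} forces $t=t'$, contradicting $(i,t)\neq(i',t')$; hence $i\neq i'$. By Remark~\ref{boustro}, the paths $\Gamma_i^+(t)\in\mathcal{I}^i$ and $\Gamma_{i'}^+(t')\in\mathcal{I}^{i'}$ then lie in distinct quadrants, so the fiber $\mathcal{F}:=\overline{\iota}^{-1}(J_i(t))$ satisfies $|Q(\mathcal{F})|\geq 2$. I next rule out case~2 of Proposition~\ref{prop:shootoff}: its fibers $\{\Gamma_j^+(u),\Gamma_j^-(u)\}$ contain only one $\Gamma^+$-hook (along the single leaf $\ell_j$), whereas $\mathcal{F}$ contains two $\Gamma^+$-hooks along distinct leaves. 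Thus $\mathcal{F}$ falls under case~3 of Proposition~\ref{prop:shootoff}, supplying a unique ruling segment $[\Omega,p]$ of sidelengths $(u,u')$ inscribed in a singularity-free rectangle in the quadrant bounded by some $\ell_j,\ell_{j+1}$.

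The crucial step is to match $\Gamma_i^+(t)$ and $\Gamma_{i'}^+(t')$ with the 4-hook family $\{\Gamma_j^{\pm}(u),\Gamma_{j+1}^{\pm}(u')\}\subset\mathcal{F}$. The proof of Proposition~\ref{prop:shootoff} shows that the only elements of $\mathcal{F}$ whose geodesic straightening does \emph{not} pass through $p$ are precisely these four hooks; every other element begins with the segment $[\Omega,p]$, whose initial direction is the diagonal of the rectangle and is therefore \emph{not} along any leaf $\ell_k$ (recalling that $\ell_k$ contains no singularity past $\Omega$, so $p\notin\ell_k$). Such paths cannot be $\Gamma^+$-hooks. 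Hence the $\Gamma^+$-hooks inside $\mathcal{F}$ are exactly $\Gamma_j^+(u)$ and $\Gamma_{j+1}^+(u')$, which forces $\{(i,t),(i',t')\}=\{(j,u),(j+1,u')\}$. This yields $|i-i'|=1$ and places $p$ at the prescribed coordinates in the quadrant between $\ell_i$ and $\ell_{i'}$. The converse is direct: given such $p$, Proposition~\ref{prop:shootoff} case~3 places $\Gamma_i^+(t)$ and $\Gamma_{i'}^+(t')$ in a single fiber, giving $J_i(t)=J_{i'}(t')$. I expect the main subtlety to be the argument that ``paths through $p$'' in $\mathcal{F}$ cannot be $\Gamma^+$-hooks, which relies critically on the absence of singularities along the leaves $\ell_k$ issued from $\Omega$.
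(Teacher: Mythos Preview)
Your overall strategy matches the paper's: split on whether the common value is $\infty$, then invoke Proposition~\ref{prop:shootoff} to control how many leaves $\ell_k$ can carry hooks lying in a single fiber. The first paragraph and the exclusion of $i=i'$ are fine.

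The ``crucial step'' paragraph, however, contains a genuine error. You assert that the four hooks $\Gamma_j^{\pm}(u),\Gamma_{j+1}^{\pm}(u')$ are exactly the elements of $\mathcal{F}$ whose straightening avoids $p$, and that any path beginning with $[\Omega,p]$ cannot be a $\Gamma^+$-hook. Both claims are false in the ruling case. The hook $\Gamma_j^+(u)$ is defined as the \emph{straightening} of ``follow $\ell_j$ for length $u$, then turn right''; since the horizontal leaf at height $u$ hits $p$ first (the rectangle being singularity-free), this hook terminates at $p$ and straightens precisely to the ruling segment $[\Omega,p]$. The same holds for $\Gamma_{j+1}^-(u')$. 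So two of your ``four hooks'' \emph{do} pass through $p$, while the four fiber elements that genuinely avoid $p$ (the paths $\gamma',\gamma'',\dot\gamma,\ddot\gamma$ of the proof of Proposition~\ref{prop:shootoff}) include two non-hooks $\dot\gamma,\ddot\gamma$. Your implicit premise that a straightened hook retains an initial direction along a leaf is the source of the confusion.

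The conclusion you want is still correct, and the fix is simpler than your attempted argument. If $\Gamma_k^+(v)\in\mathcal{F}$ for some $(k,v)$, then by \eqref{collapse} also $\Gamma_k^-(v)\in\mathcal{F}$, so both quadrants $k$ and $k-1$ lie in $Q(\mathcal{F})$. In case~3 one has $Q(\mathcal{F})=\{j-1,j,j+1\}$, forcing $k\in\{j,j+1\}$; the value of $v$ is then pinned down by the injectivity of $J_k$ from Proposition~\ref{prop:jordan1}. This is essentially how the paper argues, and it bypasses any analysis of which fiber elements pass through $p$.
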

\begin{proof}
For the ``if'' direction, the first case is again the characterization of the $\overline{\iota}$-fiber of $\Omega$ by Fact~B. The second case follows similarly from the characterization of the $\overline{\iota}$-fiber of $p$ (more precisely, of the path represented by the ruling segment $[\Omega, p]$).

For the ``only if'' direction, suppose first that $J_i(t)=J_{i'}(t')$ is the point $\overline{\iota}(\Omega)$: by Fact~B, the corresponding hooks are degenerated to full leaves of $\lambda^\pm$, i.e.\ $t$ and $t'$ belong to $\{0,+\infty\}$. If $J_i(t)=J_{i'}(t')$ is some other point $P$, then we can apply the second and third cases of Proposition~\ref{prop:shootoff}: the fiber $\mathcal{F}=\overline{\iota}^{-1}(P)$ contains at most two (opposite) $t$-hooks along any leaf $\ell_i$, and this happens either for one value of $i$ (in which case $P$ belongs only to the Jordan curve $J_i(\overline{\mathbb{R}^+})$) or for two consecutive values of $i$ (in which case $P$ belongs to the two curves). The latter case arises precisely when $\mathcal{F}$ contains a ruling segment, and $(t,t')$ are then its coordinates.
\end{proof}

\subsection{Proof of Theorem~\ref{thm:jordan}}

Propositions \ref{prop:jordan1} and \ref{prop:jordan2} show that two Jordan curves $J_i(\overline{\mathbb{R}^+})\subset \mathbb{P}^1\mathbb{C}$ intersect only at $\infty$ and at a \emph{discrete} subset of $\mathbb{C}$, because ruling singularities do not accumulate. More precisely, $J_i(\overline{\mathbb{R}^+})\cup J_{i+1}(\overline{\mathbb{R}^+}) \smallsetminus \{\infty\}$ is the boundary of the union of an infinite chain of disks $(\delta^i_s)_{s\in\mathbb{Z}}$, each disk sharing just one boundary point with the next.

To finish proving Theorem~\ref{thm:jordan}, it remains to check that the Jordan curves $J_i(\overline{\mathbb{R}^+})$ never \emph{cross} each other (i.e.\ they bound nested disks $D_i$), and that the Cannon-Thurston map $\overline{\iota}$ fills the string of disks $D_i\smallsetminus D_{i+1}$ in linear order.

First, define $D'_i$ as the image under $\overline{\iota}$ of all paths whose initial segment lies clockwise from $\ell_i$:
$$D'_i:=\overline{\iota}\left (\overline{\bigcup_{j\geq i} \mathcal{I}^{j}} \right).$$

\begin{proposition} \label{fillup}
The set $D'_i$ is the closure of one complementary component of the Jordan curve $J_i(\overline{\mathbb{R}^+})$ in the sphere $\mathbb{P}^1\mathbb{C}$.
\end{proposition}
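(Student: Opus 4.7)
The plan is to introduce closed sub-arcs $\mathcal{A} := \{\Omega\} \cup \bigcup_{j\geq i} \mathcal{I}^j$ and $\mathcal{B} := \{\Omega\} \cup \bigcup_{j<i} \mathcal{I}^j$ of $\mathbb{S}$, which satisfy $\mathcal{A}\cup \mathcal{B} = \mathbb{S}$ and $\mathcal{A}\cap \mathcal{B} = \{\Omega, \ell_i\}\subset \overline{\iota}^{-1}(\infty)$. Setting $E := \overline{\iota}(\mathcal{B})$, both $D'_i = \overline{\iota}(\mathcal{A})$ and $E$ are compact connected subsets of $\mathbb{P}^1\mathbb{C}$, and surjectivity of $\overline{\iota}$ (Fact~B) gives $D'_i \cup E = \mathbb{P}^1\mathbb{C}$. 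The proof will then proceed by establishing the identity $D'_i \cap E = J_i(\overline{\mathbb{R}^+})$ and invoking the Jordan curve theorem.

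The inclusion $D'_i\cap E \supset J_i(\overline{\mathbb{R}^+})$ will follow directly from \eqref{collapse}, because $\Gamma_i^+(t)\in \mathcal{I}^i\subset \mathcal{A}$ and $\Gamma_i^-(t)\in \mathcal{I}^{i-1}\subset \mathcal{B}$ both map to $J_i(t)$ for every $t$. For the reverse inclusion, Proposition~\ref{prop:shootoff} will be applied to the fiber $\mathcal{F}:=\overline{\iota}^{-1}(P)$ of any $P\in D'_i\cap E$; by construction $\mathcal{F}$ must meet both $\mathcal{A}$ and $\mathcal{B}$. The case $|Q(\mathcal{F})|=1$ forces $\mathcal{F}\cap\mathcal{A}\cap\mathcal{B}\neq\emptyset$, hence $P=\infty\in J_i(\overline{\mathbb{R}^+})$. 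The case $|Q(\mathcal{F})|=2$ yields $\mathcal{F}=\{\Gamma_j^+(t),\Gamma_j^-(t)\}$ with quadrants $\mathcal{I}^j$ and $\mathcal{I}^{j-1}$ separated by $\ell_j$; the required $\mathcal{A}/\mathcal{B}$ split pins $j=i$, so $P=J_i(t)$. The case $|Q(\mathcal{F})|=3$ occupies three consecutive quadrants $\mathcal{I}^{j-1}, \mathcal{I}^j, \mathcal{I}^{j+1}$ around a ruling singularity in $\mathcal{I}^j$, and the same straddling argument forces $j\in\{i-1,i\}$; in either subcase $\mathcal{F}$ contains some $\Gamma_i^\pm(s)$, so again $P\in J_i(\overline{\mathbb{R}^+})$.

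Once the identity is in hand, $D'_i\smallsetminus J_i = \mathbb{P}^1\mathbb{C}\smallsetminus E$ and $E\smallsetminus J_i = \mathbb{P}^1\mathbb{C}\smallsetminus D'_i$ are open, disjoint, and cover $\mathbb{P}^1\mathbb{C}\smallsetminus J_i(\overline{\mathbb{R}^+})$; comparison with the two complementary components supplied by the Jordan curve theorem (Proposition~\ref{prop:jordan1}) will identify $D'_i$ as the closure of one of them, provided neither open set is empty. To exhibit a point of $D'_i\smallsetminus J_i$ I will take any ruling singularity $p\in \mathcal{I}^j$ with $j\geq i+1$, guaranteed by the staircase construction recalled above: Proposition~\ref{prop:shootoff} confines $\overline{\iota}^{-1}(\overline{\iota}(p))$ to $\mathcal{I}^{j-1}\cup \mathcal{I}^j\cup \mathcal{I}^{j+1}\subset \mathcal{A}$, so $\overline{\iota}(p)\notin E$. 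A mirror choice in $\mathcal{I}^j$ with $j\leq i-2$ handles $E\smallsetminus J_i$. The hard part will be organizing the case analysis in the second paragraph cleanly, so that every fiber straddling $\mathcal{A}$ and $\mathcal{B}$ visibly lands on $J_i(\overline{\mathbb{R}^+})$; the remaining steps are routine point-set topology on the sphere.
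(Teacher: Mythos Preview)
Your approach is correct and shares its core with the paper's proof: both hinge on Proposition~\ref{prop:shootoff} to show that any $\overline{\iota}$-fiber straddling the two sides of $\ell_i$ must land on $J_i(\overline{\mathbb{R}^+})$. The packaging differs slightly. The paper argues directly that $D'_i\smallsetminus J_i(\overline{\mathbb{R}^+})$ is clopen in $\mathbb{P}^1\mathbb{C}\smallsetminus J_i(\overline{\mathbb{R}^+})$: closedness is immediate, and for openness it takes a point $\overline{\iota}(\gamma)\in D'_i$ with no neighborhood in $D'_i$, uses surjectivity of $\overline{\iota}$ and a compactness/limit argument to produce a fiber element on the counterclockwise side, and then invokes Proposition~\ref{prop:shootoff}. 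Your symmetric formulation---introducing $E=\overline{\iota}(\mathcal{B})$ and proving the identity $D'_i\cap E = J_i(\overline{\mathbb{R}^+})$---sidesteps that limiting step, since openness of $D'_i\smallsetminus J_i = \mathbb{P}^1\mathbb{C}\smallsetminus E$ follows at once from compactness of $E$. This is a mild but genuine streamlining.

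Two small imprecisions to tighten. First, Proposition~\ref{prop:shootoff} presupposes $\Omega\notin\mathcal{F}$, so you should dispose of $P=\infty$ \emph{before} invoking it (trivially $\infty=J_i(0)\in J_i(\overline{\mathbb{R}^+})$); once $P\neq\infty$ the fiber contains no leaf $\ell_k$ either, and the case $|Q(\mathcal{F})|=1$ is then simply impossible (the single quadrant lies wholly in $\mathcal{A}$ or wholly in $\mathcal{B}$), rather than ``forcing $\mathcal{F}\cap\mathcal{A}\cap\mathcal{B}\neq\emptyset$'' as you wrote. Second, in the case $|Q(\mathcal{F})|=2$, Proposition~\ref{prop:shootoff} only asserts $\Gamma_j^\pm(t)\in\mathcal{F}$, not equality of $\mathcal{F}$ with that pair; this is harmless for your purposes since $P=J_j(t)$ follows regardless, and the two occupied quadrants are still $\mathcal{I}^{j-1},\mathcal{I}^j$.
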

\begin{proof}
First, $D'_i$ is closed, as it is the image of a compact interval under a continuous map. 
Therefore $D'_i\smallsetminus J_i(\overline{\mathbb{R}^+})$ is closed in $ \mathbb{P}^1\mathbb{C}\smallsetminus J_i(\overline{\mathbb{R}^+})$; let us prove that it is also open.
Let $\gamma$ be a nontrivial path whose initial segment lies clockwise from $\ell_i$, and is not itself a leaf $\ell_{j}$.
 
Suppose $D'$ does not contain a neighborhood of $\overline{\iota}(\gamma)$. By surjectivity of the Cannon-Thurston map $\overline{\iota}$, this means some paths whose initial segment lies \emph{counterclockwise} from $\ell_i$ are mapped by $\overline{\iota}$ arbitrarily close to $\overline{\iota}(\gamma)$; taking limits it implies, by continuity of $\overline{\iota}$, that the $\overline{\iota}$-fiber $\mathcal{F}$ of $\gamma$ contains paths whose initial segment lies counterclockwise from $\ell_i$ (this is a compactness argument: the limiting paths $\ell_i$ and $\Omega$ of $\mathbb{S}\smallsetminus \bigcup_{j\geq i} \mathcal{I}^j$ are already known to belong to a different fiber).
 
The fiber $\mathcal{F}$ contains paths belonging to (the interiors of) $k$ consecutive quadrants for some $k\in\{1,2,3\}$, by Proposition~\ref{prop:shootoff}. The discussion above implies $k\geq 2$. The case $k=2$ means, by Proposition~\ref{prop:shootoff}, that some element of $\mathcal{F}$ is (the geodesic straightening of) a $t$-hook along $\ell_i$, hence $\overline{\iota}(\gamma)$ belongs to the Jordan curve $J_i(\overline{\mathbb{R}^+})$. By Proposition~\ref{prop:jordan2}, the case $k=3$ means that $\overline{\iota}(\gamma)$ belongs to the intersection of two Jordan curves: $J_i(\overline{\mathbb{R}^+}) \cap J_{i+1}(\overline{\mathbb{R}^+})$ or $J_i(\overline{\mathbb{R}^+}) \cap J_{i-1}(\overline{\mathbb{R}^+})$. In any case, $\overline{\iota}(\gamma)\in J_i(\overline{\mathbb{R}^+})$. This proves openness of $D'_i\smallsetminus J_i(\overline{\mathbb{R}^+})$ in $\mathbb{P}^1\mathbb{C} \smallsetminus J_i(\overline{\mathbb{R}^+})$.
 
To see that $D'$ is equal to only one side of  $J_i(\overline{\mathbb{R}^+})$, just remark that there are $\overline{\iota}$-fibers all of whose elements start off counterclockwise from $\ell_i$ (a $\overline{\iota}$-fiber, other than that of $\Omega$, occupies at most 3 consecutive quadrants by Proposition~\ref{prop:shootoff}).
\end{proof}

The proposition above implies that $D'_i$ equals $D_i$, the disk bounded by the $i$-th Jordan curve $J_i(\overline{\mathbb{R}^+})$. The inclusion $D_i\supset D_{i+1}$ is immediate from the definition of~$D'_i$. 
Therefore the closure of $D_i\smallsetminus D_{i+1}$ is the union of a $\mathbb{Z}$-sequence of disks $(\delta^i_s)_{s\in\mathbb{Z}}$, each having only one (boundary) point $\overline{\iota}(p^i_s)$ in common with the previous one. 

\begin{proposition} \label{fillup2}
For any $(i,s)\in\mathbb{Z}^2$, we have $\overline{\iota}(\mathcal{I}^i_s)=\delta^i_s$. 
\end{proposition}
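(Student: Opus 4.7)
The plan is to prove $\overline{\iota}(\mathcal{I}^i_s)=\delta^i_s$ by double inclusion. Throughout, let $v(s)$ and $h(s)$ denote respectively the vertical (along $\ell_i$) and horizontal (along $\ell_{i+1}$) coordinates of the ruling singularity $p^i_s$ relative to $\Omega$, so that $[\Omega,p^i_s]=\Gamma^+_i(v(s))=\Gamma^-_{i+1}(h(s))$.

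For the inclusion $\partial\delta^i_s\subseteq\overline{\iota}(\mathcal{I}^i_s)$ I would trace the two boundary arcs of $\delta^i_s$ directly. By Remark~\ref{boustro}, the sub-arc $\{\Gamma^+_i(t):t\in[v(s+1),v(s)]\}$ lies entirely in $\mathcal{I}^i_s$, with endpoints at the two ruling segments bounding $\mathcal{I}^i_s$; composing with $\overline{\iota}$ and invoking Proposition~\ref{prop:jordan1} yields the $J_i$-arc of $\partial\delta^i_s$ joining the gates $\overline{\iota}(p^i_s)$ and $\overline{\iota}(p^i_{s+1})$. An identical argument with $\Gamma^-_{i+1}$ over $[h(s),h(s+1)]$ produces the $J_{i+1}$-arc, assembling all of $\partial\delta^i_s$.

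For the reverse inclusion $\overline{\iota}(\mathcal{I}^i_s)\subseteq\delta^i_s$, Proposition~\ref{fillup} applied to $i$ and to $i+1$ already confines the image to $\overline{D_i\smallsetminus D_{i+1}}=\bigcup_{s'}\delta^i_{s'}$. I would then emulate the open-plus-closed argument of Proposition~\ref{fillup} at the finer scale of disks in the chain, showing that $\overline{\iota}(\mathcal{I}^i_s)\smallsetminus\partial\delta^i_s$ is both closed (by compactness) and open in $(\bigcup_{s'}\delta^i_{s'})\smallsetminus\partial\delta^i_s$. Openness reduces to the claim: whenever the fiber of a point $p\in\overline{\iota}(\mathcal{I}^i_s)$ contains a path in $\mathbb{S}\smallsetminus\mathcal{I}^i_s$, the point $p$ lies on $\partial\delta^i_s$. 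Inter-quadrant fibers are handled by Proposition~\ref{prop:shootoff} exactly as in the proof of Proposition~\ref{fillup}, placing the image on $J_i\cup J_{i+1}$ (the other curves $J_{i-1},J_{i+2}$ being disjoint from the chain $\bigcup_{s'}\delta^i_{s'}$ except at $\infty$); a direct preimage analysis via Proposition~\ref{prop:jordan2} then matches the $J_i$-parameter range with the index $s$ of the containing sub-interval. The new case is a fiber whose paths all lie in $\mathcal{I}^i$ but straddle distinct sub-intervals $\mathcal{I}^i_{s'}\neq\mathcal{I}^i_{s''}$. By Fact~B, such two paths must be linked either by a common non-singular leaf point---in which case uniqueness of $\overline{\Sigma}$-geodesics forces them to share the entire prefix from $\Omega$ back to the merging singularity, contradicting distinct initial directions---or by a common singularity $p^\ast$. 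Distinct initial directions from $\Omega$ then force $[\Omega,p^\ast]$ to be a single straight segment, hence a ruling segment $[\Omega,p^i_{s^{\ast\ast}}]$; to border the two sub-intervals, $s^{\ast\ast}\in\{s,s+1\}$, placing $p$ at a gate of $\delta^i_s$.

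To upgrade the two inclusions to equality, I would appeal to surjectivity of $\overline{\iota}$: every $p\in\mathrm{int}(\delta^i_s)$ has some preimage $\gamma\in\mathbb{S}$; since Step~2 proves $\overline{\iota}(\mathcal{I}^i_{s'})\subseteq\delta^i_{s'}$ simultaneously for every $s'$ (the argument being symmetric in $s'$) and $\mathrm{int}(\delta^i_s)\cap\delta^i_{s'}=\emptyset$ for $s'\neq s$, the condition $p\in\mathrm{int}(\delta^i_s)$ forces $\gamma\in\mathcal{I}^i_s$, giving $p\in\overline{\iota}(\mathcal{I}^i_s)$. The main obstacle is the sub-quadrant fiber analysis in Step~2: Proposition~\ref{prop:shootoff} only covers fibers spread across distinct quadrants at $\Omega$, and the sub-quadrant variant, although it follows from the same Fact~B reasoning applied at the ruling singularity in place of $\Omega$, requires a separate case analysis not previously carried out in the paper.
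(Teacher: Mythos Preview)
Your overall strategy --- the clopen argument in $\mathbb{P}^1\mathbb{C}\smallsetminus\partial\delta^i_s$, reducing openness to a fiber statement --- is exactly the paper's, and you correctly identify the ``sub-quadrant'' fiber analysis as the crux. However, the paper resolves this obstacle quite differently from your sketch, and your sketch has a genuine gap.

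The paper does \emph{not} split into an inter-quadrant case and an intra-quadrant case. Instead it proves a single contrapositive: \emph{if a fiber $\mathcal{F}$ meets two distinct subintervals $\mathcal{I}^i_s$ and $\mathcal{I}^{i'}_{s'}$, then it meets two distinct full quadrants}. In the case $|\mathcal{F}|=2$, distinct subintervals force the merging singularity to be $\Omega$, and then a height comparison does the work: writing $y$ for the height of the shared horizontal leaf, the ruling singularity $p_s$ (at height $y_s$) acts as a \emph{blocker}. If $y>y_s$ it forces the second path back into $\mathcal{I}^i_s$ (contradiction); if $y<y_{s+1}$ neither path is in $\mathcal{I}^i_s$; so $y\in(y_{s+1},y_s)$, and the absence of singularities in the rectangle $[0,x_{s+1}]\times[0,y_s]$ then sends the second path into a different quadrant (the paths are $t$-hooks). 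The case $|\mathcal{F}|=\infty$ reduces to this by considering $\overline{\gamma}$ and one $\gamma_n$. The upshot is that your ``new case'' (same quadrant, different subintervals) is \emph{vacuous}.

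By contrast, your sketch tries to analyze that case directly, and the reasoning is incomplete. In your sub-case (a), ``sharing the prefix back to the merging singularity contradicts distinct initial directions'' is only a contradiction when the merging singularity is $\neq\Omega$; you do not rule out the merging point being $\Omega$ itself (it \emph{is} ruled out by the same-quadrant hypothesis, since then the two paths would be opposite $t$-hooks in consecutive quadrants, but you do not say this). In sub-case (b), the assertion ``distinct initial directions force $[\Omega,p^\ast]$ to be a single straight segment'' is the heart of the matter and is not justified: nothing you have written prevents a multi-segment $[\Omega,p^\ast]$ with a first turn at some $q$, and a tacked-on leaf $l_n$ whose straightening cuts the corner at $q$ without leaving the quadrant. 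The paper's blocker argument is precisely what rules this out, and it is a geometric input (the staircase structure) rather than a formal consequence of Fact~B or Proposition~\ref{prop:shootoff}. Your suggestion to ``apply Fact~B reasoning at the ruling singularity in place of $\Omega$'' is plausible as an alternative route, but it would amount to re-proving an analogue of Proposition~\ref{prop:shootoff} at $p^i_s$, and your paragraph does not do this.
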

\begin{proof}
It follows from Propositions \ref{prop:jordan2} and \ref{fillup} 
that the gate $\delta^i_s\cap \delta^i_{s+1}$ is the image under $\overline{\iota}$ of the $s$-th ruling segment $[\Omega, p^i_s]$ in the $i$-th quadrant.

Writing $p^i_s=p_s=(x_s, y_s)$ the natural coordinates in the quadrant ($y_s$ is along~$\ell_i$, and $x_s$ along $\ell_{i+1}$), the boundary of $\delta^i_s$ is a Jordan curve that can be broken up into two arcs:
$$\begin{array}{rrcl} &
\overline{\iota}(\Gamma_i^+([y_{s+1}, y_s])) &= & J_i([y_{s+1}, y_s]), \\  \text{and}
 &\overline{\iota}(\Gamma_{i+1}^-([x_s, x_{s+1}]))&=&J_{i+1}([x_s, x_{s+1}]).\end{array}$$
The strategy is now similar to the proof of Proposition~\ref{fillup}, replacing $\overline{\bigcup_{j\geq i} \mathcal{I}^j}$ with~$\mathcal{I}^i_s$. The set $\overline{\iota}(\mathcal{I}^i_s) \smallsetminus \partial \delta^i_s$ is closed in $\mathbb{P}^1\mathbb{C}\smallsetminus \partial \delta^i_s$ because $\mathcal{I}^i_s$ is compact; let us prove that it is also open. Let $\gamma\in \mathcal{I}^i_s$ be a path. By surjectivity and continuity of $\overline{\iota}$, it is enough to prove that if the $\overline{\iota}$-fiber $\mathcal{F}$ of $\gamma$ does not contain (the geodesic straightening of) a $t$-hook (i.e.\ $\overline{\iota}(\gamma) \notin \partial \delta_s^i$), then $\mathcal{F}$ is contained in $\mathcal{I}^i_s$. Contrapositively, prove that if $\mathcal{F}$ intersects two distinct subintervals $\mathcal{I}^i_s$ and $\mathcal{I}^{i'}_{s'}$, then $\mathcal{F}$ intersects two distinct intervals $\mathcal{I}^i$ and~$\mathcal{I}^{i''}$. 

If $\mathcal{F}$ is a singleton, then there is nothing to prove. If $\mathcal{F}$ consists of two elements, then these are paths $\gamma$ and $\gamma'$ asymptotic to the two ends of a foliation leaf $\ell$ (horizontal, say), as in Figure~\ref{merge} above: the two geodesics coincide up to a singularity~$p$, then diverge from each other, forming together with $\ell$ the boundary of a triangle containing no singularity. If $\gamma'\notin \mathcal{I}_s^i$, then the initial segments of $\gamma$ and $\gamma'$ are distinct, meaning that $p=\Omega$. There are several possible situations (Figure~\ref{block}):
\begin{figure}[h!]
\centering
\labellist
\small\hair 2pt
\pinlabel $\Omega$ [c] at 22 2
\pinlabel $p_{s+1}$ [c] at 61 16
\pinlabel $p_s$ [c] at 33 34
\pinlabel $\ell$ [c] at 50 45
\pinlabel $\gamma$ [c] at 73 33
\pinlabel $\gamma'$ [c] at 18 40
\pinlabel $y$ [c] at 29 50
\pinlabel $\ell_i$ [c] at 24 58
\pinlabel $\ell_{i+1}$ [c] at 96 2
\pinlabel $\Omega$ [c] at 122 2
\pinlabel $p_{s+1}$ [c] at 160 15
\pinlabel $p_s$ [c] at 135 39
\pinlabel $\ell$ [c] at 162 33
\pinlabel $\gamma$ [c] at 180 24
\pinlabel $\gamma'$ [c] at 103 24
\pinlabel $y$ [c] at 123 33
\pinlabel $\ell_i$ [c] at 128 49
\pinlabel $\ell_{i+1}$ [c] at 184 5
\endlabellist
\includegraphics[width=12.5cm]{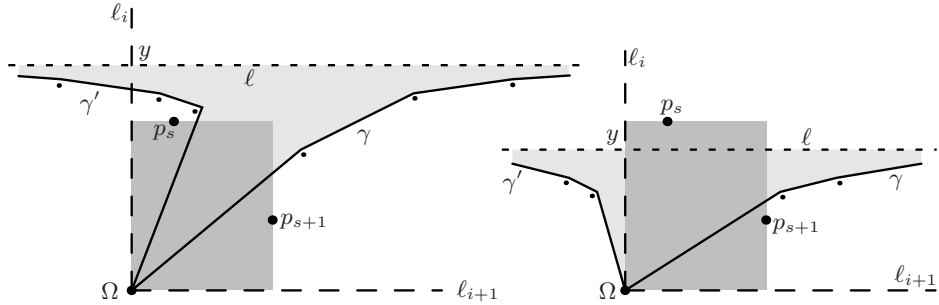}
\caption{Case $|\mathcal{F}|=2$. The shaded regions contain no singularity.}
\label{block}
\end{figure} 
\begin{itemize}
 \item If the height $y$ of the horizontal line $\ell$ is larger than $y_s$, then the paths $\gamma$ and $\gamma'$ both lie in $\mathcal{I}^i_s$. Indeed, the singularity $p_s$ prevents the geodesic straightening $\gamma'$ from having an initial segment with slope any larger than~$\frac{y_s}{x_s}$. (This will be the actual value if $y-y_s$ is small enough, but otherwise another singularity can force the slope to be even lower, as in the left panel of Figure~\ref{block}).
\item If $y\in (y_{s+1},y_s)$, then the absence of singularity in the rectangle $[0,x_{s+1}]\times [0,y_s]$ causes the initial segment of $\gamma'$ to lie in a another quadrant: $\mathcal{F}$ intersects a second interval $\mathcal{I}^{i\pm 1}$ in addition to $\mathcal{I}^i$. See the right panel of Figure \ref{block}. (In particular, $\gamma$ and $\gamma'$ are $t$-hooks, with $t=y$, and $\overline{\iota}(\gamma)\in \partial \delta^i_s$.)
\item The case $y<y_{s+1}$ is ruled out, as neither $\gamma$ nor $\gamma'$ would belong to the subinterval $\mathcal{I}^i_s$. This finishes the case $|\mathcal{F}|=2$.
\end{itemize}

Finally we discuss the case that the fiber $\mathcal{F}$ is infinite. The elements of $\mathcal{F}$ are a certain path $\overline{\gamma}$ terminating at a (not necessarily ruling) singularity $p$, and all the paths $(\gamma_n)_{n\in\mathbb{Z}}$ obtained from $\overline{\gamma}$ by tacking on a vertical or horizontal leaf issued from $p$. Each of these paths belongs to some $\mathcal{I}^{\mathbf{i}(\gamma_n)}_{\mathbf{s}(\gamma_n)}$, for some integers $\mathbf{i}(\gamma_n)$ and $\mathbf{s}(\gamma_n)$ determined by the initial segment of (the geodesic straightening of) $\gamma_n$.
If $\overline{\gamma}\in \mathcal{F}$ is a ruling segment, we have shown (see Figure~\ref{merge-3} and Proposition \ref{prop:shootoff}) that $\{\mathbf{i}(\gamma_n)\}_{n\in\mathbb{Z}}$ consists of three consecutive integers and the proof is finished. If not, then $(\mathbf{i}, \mathbf{s})$ is a well-defined function on $\mathcal{F}=\{\overline{\gamma}\}\cup\{\gamma_n\}_{n\in\mathbb{Z}}$, and we can assume $(\mathbf{i}, \mathbf{s})(\overline{\gamma})\neq (\mathbf{i}, \mathbf{s})(\gamma_n)$ for some $n\in \mathbb{Z}$. Since $\overline{\gamma}$ and $\gamma_n$ differ only by the final leaf from $p$, the situation is analogous to the case $|\mathcal{F}|=2$ (Figure \ref{block}). The only difference is that the leaf $\ell$ terminates at a point $p$, but the argument is exactly the same (applied to $\overline{\gamma}, \gamma_n$ instead of $\gamma, \gamma'$): if the initial segments of the two paths fall in distinct subintervals ($\mathcal{I}^{\mathbf{i}(\gamma_n)}_{\mathbf{s}(\gamma_n)}\neq \mathcal{I}^{\mathbf{i}(\overline{\gamma})}_{\mathbf{s}(\overline{\gamma})}$), then they fall in completely different quadrants: $\mathbf{i}(\gamma_n) \neq \mathbf{i}(\overline{\gamma})$.
\end{proof}

Theorem~\ref{thm:jordan} is now proved: the boustrophedonic order of filling of the discs $\delta^i_s$ derives from the opposite directions of monotonicity in Remark~\ref{boustro}.
In particular, we can speak of the objects define in Section \ref{sec:mainresult}: the $i$-th furrow $\bigcup_{s\in\mathbb{Z}} \delta^i_s$; cross-furrow and in-furrow edges in $\partial \delta^i_s$; gates $\delta^i_s\cap \delta^i_{s\pm 1}$ and spikes.

\section{Proof of Theorem~\ref{thm:dico}} \label{sec:dico}
The proofs of the previous section have made it clear that the maximal singularity-free rectangles play an important role in the combinatorics of the Cannon-Thurston map~$\overline{\iota}$. Since these rectangles also govern the Agol triangulation by Theorem~\ref{thm:veering}, a result such as Theorem~\ref{thm:dico} should now come as no surprise. In this section we establish the detailed correspondence.

\subsection{The dictionary or Theorem \ref{thm:dico}.(1)}\label{sec:dico1}
More precisely, we now discuss and illustrate the various entries in the right column (Agol triangulation features) of the dictionary table of Theorem \ref{thm:dico}. To each such feature, we associate first a type of rectangle in $\overline{\Sigma}$, and then a feature of the Cannon-Thurston tessellation (left column of the dictionary table). These correspondences will be clearly bijective, proving Theorem \ref{thm:dico}.(1).

All figures in this section will obey the following convention:
\begin{itemize}
\item Left panel: the Cannon-Thurston tessellation.
\item Middle panel: the singular Euclidean surface $\overline{\Sigma}$.
\item Right panel: the Agol triangulation (more precisely, its vertex link).
\end{itemize}
Each of the 3 panels may consist of several diagrams showing different possibilities (orientation or color reversals, even/odd quadrants etc): these diagrams are in natural correspondence between the 3 panels.

Note that the left and right panel both live in the (same) complex plane, while the middle panel lives in the singular surface $\overline{\Sigma}$. Each diagram in the middle panel is considered up to a $180^\circ$ rotation, since $\Sigma$ is only a \emph{half}-translation surface. Up to this ambiguity, the figures are designed to systematically capture all possible cases.
Note also the following correspondence:

\begin{center}
\begin{tabular}{r|l}
 In $\overline{\Sigma}$ (middle panel) & In the link of $\Omega$ (other panels) \\ \hline
 Clockwise rotation & Rightwards motion \\
 Vertical direction & Top end \\
 Horizontal direction & Bottom end.
\end{tabular}
\end{center}

\subsection*{Vertices (Figure~\ref{vertices})}
Ruling segments $[\Omega, p]$ in $\overline{\Sigma}$ correspond to vertices of the Agol triangulation by Theorem~\ref{thm:veering}, and to vertices of the Cannon-Thurston tessellation by Proposition~\ref{prop:jordan2}.
\begin{figure}[h!]
\centering
\labellist
\small\hair 2pt
\pinlabel {\emph{Cannon-Thurston tessellation}} [c] at 0 -7
\pinlabel {\emph{Flat surface $\overline{\Sigma}$}} [c] at 125 -7
\pinlabel {\emph{Agol triangulation}} [c] at 250 -7
\pinlabel $\Omega$ [c] at 144 5
\pinlabel $\Omega$ [c] at 106 42
\endlabellist
\includegraphics[width=10cm]{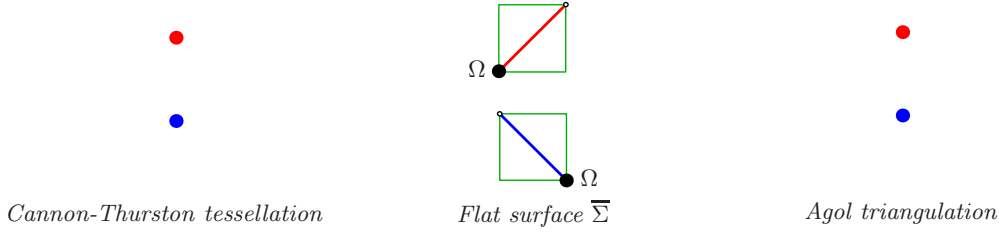}
\vspace{7pt}
\caption{In the middle panel, the green rectangle contains no singularity in its interior and $\Omega$ is the dark vertex; this convention is held up in all pictures of Section~\ref{sec:dico1}.}
\label{vertices}
\end{figure} 

\subsection*{Ladderpole edges (Figure~\ref{ladderpole})}
Recall that ladderpole edges in the Agol triangulation are by definition edges connecting two vertices of the same color. These two vertices necessarily come from two consecutive ruling edges of a given quadrant. The paths (of $\mathcal{I}^i_s$) inbetween these two ruling segments are mapped by $\overline{\iota}$ to exactly one 2-cell $\delta^i_s$ of the Cannon-Thurston tessellation, by Proposition~\ref{fillup2}.

Furrows $\bigcup_{s\in\mathbb{Z}} \delta^i_s$ correspond to full quadrants $\bigcup_{s\in\mathbb{Z}} \mathcal{I}^i_s=\mathcal{I}_s$ in $\overline{\Sigma}$, which correspond to full sequences of ruling singularities $(p^i_s)_{s\in\mathbb{Z}}$, and in turn to sequences of ladderpole edges, i.e.\ ladderpoles, in the Agol triangulation.
\begin{figure}[h!]
\centering
\labellist
\small\hair 2pt
\pinlabel $\Omega$ [c] at 184 19
\pinlabel $\Omega$ [c] at 184 57
\pinlabel $\Omega$ [c] at 291 19
\pinlabel $\Omega$ [c] at 291 57
\pinlabel {\emph{2-cells}} [c] at 77 -7
\pinlabel {\emph{Triangles, $\Omega$ in corner}} [c] at 235 -7
\pinlabel {\emph{Ladderpole edges}} [c] at 345 -7
\endlabellist
\includegraphics[width=11.5cm]{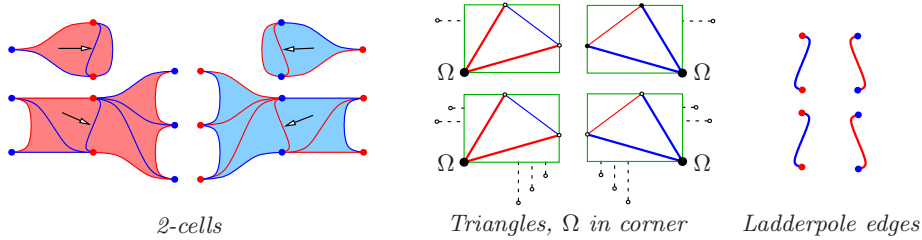}
\vspace{7pt}
\caption{In the middle panel, we have also indicated ruling singularities in the two quadrants adjacent to the one containing the singularity-free rectangle. These singularities correspond to spikes in the left panel. In the left panel, each arc indicated by an arrow is always the ladderpole edge seen in the corresponding diagram of the right panel.}
\label{ladderpole}
\end{figure} 

\subsection*{Nonhinge triangles (Figure~\ref{nonhinge})}
A triangle in the Agol triangulation corresponds by definition to a maximal singularity-free rectangle in $\overline{\Sigma}$ containing exactly one singularity in every edge, one of these singularities being $\Omega$. The triangle is non-hinge exactly when the two diagonals of the quadrilateral spanned by the four singularities have slopes of the same sign. Up to symmetry, we may then assume that $\Omega$ is the bottom vertex, and that the other three vertices $T, R, L$ (Top, Right, Left) satisfy: $x_L<x_T<0<x_R$ and $0<y_R<y_L<y_T$ where $(x_p, y_p)$ are the coordinates of a point $p$. 

\begin{figure}[h!]
\centering
\labellist
\small\hair 2pt
\pinlabel $\Omega$ [c] at 135 43
\pinlabel $R$ [c] at 148 58
\pinlabel $T$ [c] at 127 80
\pinlabel $L$ [c] at 113 66
\pinlabel $\tau$ [c] at 256 66
\pinlabel {\emph{In-furrow edges}} [c] at 35 -7
\pinlabel {\emph{Nonhinge tetrahedra}} [c] at 155 -7
\pinlabel {\emph{Nonhinge triangles}} [c] at 290 -7
\endlabellist
\includegraphics[width=12.5cm]{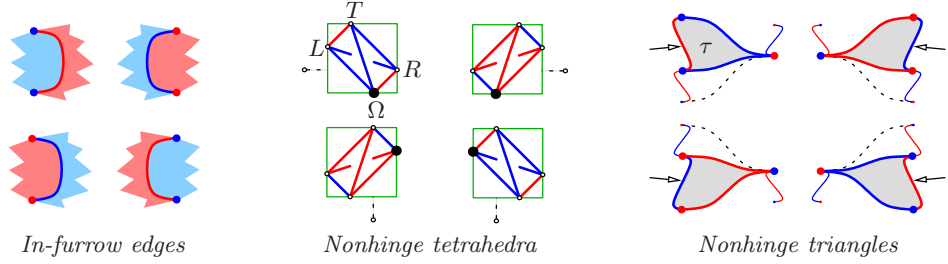}
\vspace{7pt}
\caption{In the right panel, the nonhinge triangle under consideration is shaded; we have also depicted the \emph{next} triangle across its base rung . The color of the base rung of \emph{that} second triangle (dashed) is determined by the position, in the middle panel, of the next ruling singularity after $L$ (also dashed), relative to the height $y_R$ of $R$. Finally, each edge indicated by an arrow in the right panel is isotopic to the edge seen in the corresponding diagram of the left panel.}
\label{nonhinge}
\end{figure} 

Equivalently, $L, T$ are two consecutive ruling singularities of the quadrant of $L$ and no ruling singularity in the quadrant of $R$ has a vertical coordinate in $[y_L, y_T]$. By Proposition~\ref{prop:jordan2}, this means that the Jordan curve $J_i(\overline{\mathbb{R}^+})$, associated to $t$-hooks along the vertical leaf issued from~$\Omega$, does not intersect $J_{i+1}(\overline{\mathbb{R}^+})$ between $J_i(y_L)$ and $J_i(y_T)$; the 
corresponding cell of the Cannon-Thurston tessellation thus has no spike on this side, but instead an in-furrow edge (vertical edge). 
\begin{remark} \label{rem:arrow1} This edge ``$LT$'' is topologically the same as an edge of the initial nonhinge triangle $\tau$ of the Agol triangulation: namely the edge connecting the tip of $\tau$ to the tip of the next triangle across the base rung of $\tau$. This edge is indicated by an arrow in Figure \ref{nonhinge}.
\end{remark}

\subsection*{Hinge triangles (Figure~\ref{hinge})}
Similarly, 
\begin{figure}[h!]
\centering
\labellist
\small\hair 2pt
\pinlabel $\Omega$ [c] at 134 43
\pinlabel $R$ [c] at 147 66
\pinlabel $T$ [c] at 126 80
\pinlabel $L$ [c] at 112 58
\pinlabel {\emph{Cross-furrow edges}} [c] at 37 -7
\pinlabel {\emph{Hinge tetrahedra}} [c] at 158 -7
\pinlabel {\emph{Hinge triangles}} [c] at 302 -7
\endlabellist
\includegraphics[width=12.5cm]{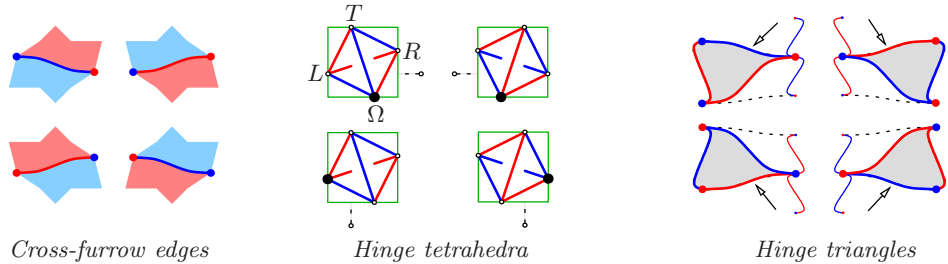}
\vspace{7pt}
\caption{In the right panel, the hinge triangle under consideration is shaded; we have also depicted the \emph{next} triangle across its base rung . The color of the base rung of \emph{that} second triangle (dashed) is determined by the position, in the middle panel, of the next ruling singularity after $R$ (also dashed), relative to the height $y_L$ of $L$.
Finally, each edge indicated by an arrow in the right panel is isotopic to the edge seen in the corresponding diagram of the left panel. 
}
\label{hinge}
\end{figure}
a hinge triangle in the Agol triangulation corresponds in $\overline{\Sigma}$ (up to symmetries) to a maximal singularity-free rectangle with vertices $\Omega, L, T, R$ such that $x_L<x_T<0<x_R$ and $0<y_L<y_R<y_T$. Equivalently, $R$ is the \emph{highest} of the ruling singularities in its quadrant whose vertical coordinate lies in $[y_L, y_T]$. By Proposition~\ref{prop:jordan2}, this means the separator $J_i(\overline{\mathbb{R}^+})$, followed downwards from the gate $J_i(y_T)$, has its first spike at $J_i(y_R)$. The arc $J_i([y_R, y_T])$ is a cross-furrow edge. 
\begin{remark} \label{rem:arrow2} This edge ``$RT$'' is topologically the same as an edge of the initial nonhinge triangle $\tau$ of the Agol triangulation: namely the edge connecting the tip of $\tau$ to the tip of the next triangle across the base rung of $\tau$. This edge is indicated by an arrow in Figure \ref{hinge}.
\end{remark}

\subsection*{Rungs (Figure~\ref{rung})}
By definition, a rung in the Agol triangulation is an edge connecting two vertices of distinct colors. If we call $L, R$ the corresponding ruling singularities in $\overline{\Sigma}$ (belonging to adjacent quadrants), then we can assume up to symmetry that $y_L<y_R$. The existence of a singularity-free rectangle circumscribed to $\{\Omega, R, L\}$ means that if $L'$ denotes the next higher ruling singularity after $L$ (in the quadrant of $L$), then  $y_R \in [y_L, y_{L'}]$. This in turn means the arc $J_i([y_L, y_{L'}])$, which connects the two gates of one cell of the Cannon-Thurston tessellation, has a spike at $J_i(y_R)$.

\begin{figure}[h!]
\centering
\labellist
\small\hair 2pt
\pinlabel $\Omega$ [c] at 151 37
\pinlabel $R$ [c] at 166 66
\pinlabel $L$ [c] at 135 54
\pinlabel {\emph{Spikes}} [c] at 39 -3
\pinlabel {\emph{Triangles, $\Omega$ in edge}} [c] at 165 -3
\pinlabel {\emph{Rungs}} [c] at 290 -3
\endlabellist
\includegraphics[width=12.5cm]{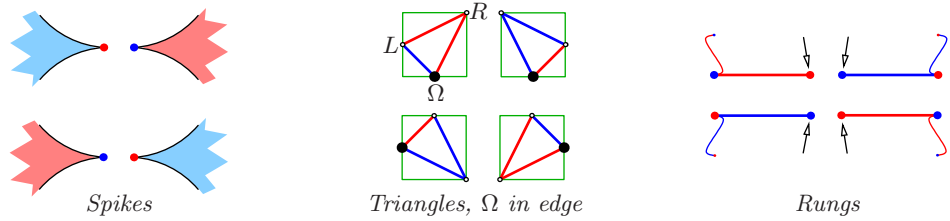}
\vspace{3pt}
\caption{In the right panel, each vertex indicated by an arrow is the vertex of the spike shown in the correponding diagram of the left panel. Also, in the right panel, we have drawn next to each rung a ladderpole edge (thinner) which is the one corresponding to the 2-cell in the left panel to which the spike belongs.}
\label{rung}
\end{figure} 

All correspondences above are clearly bijective. This proves the first, ``dictionary'' part of Theorem \ref{thm:dico}.

\subsection{The recipe book or Theorem \ref{thm:dico}.(2)}
Remarks \ref{rem:arrow1} and \ref{rem:arrow2} above, and the bijectivity of the dictionary, imply that the edges of the Cannon-Thurston tessellation are obtained from the Agol triangulation by drawing an edge between the tip of each triangle and the tip of the next triangle across its basis rung. This gives the recipe in the first direction of Theorem \ref{thm:dico}.(2).

Call $\mathcal{L}_i$ the path of tip-to-tip edges thus formed inside the $i$-th ladder. The fact that the second recipe in Theorem \ref{thm:dico}.(2) returns the original Agol triangulation can be seen purely at the level of the tessellation themselves, based on the fact that $\mathcal{L}_i$ visits all vertices of the $i$-th ladder exactly once, and that between any two consecutive vertices on one ladderpole, $\mathcal{L}_i$ visits a (possibly empty) sequence of consecutive vertices on the other ladderpole. Theorem \ref{thm:dico} is proved.

\section{Illustrations}
We finish with some extra illustrations to make the combinatorics of the filling curve $\overline{\iota}$ and the Cannon-Thurston tessellation more concrete. We will make no effort to draw its fractal-looking edges realistically: the information is still purely combinatorial.

\subsection{Global illustration}
In Figure~\ref{full-ladders} we show a relatively large sample of the Cannon-Thurston tessellation (left) and of the Agol triangulation (right), for the same underlying combinatorics. The picture shows in combination many of the local features described and illustrated in Section \ref{sec:dico1}.

\begin{figure}[h!]
\centering
\labellist

\small\hair 2pt
\pinlabel {\emph{Cannon-Thurston tessellation}} [c] at 60 -7
\pinlabel {\emph{Singular surface $\overline{\Sigma}$}} [c] at 185 -7
\pinlabel {\emph{Agol triangulation}} [c] at 300 -7
\endlabellist
\includegraphics[width=12.5cm]{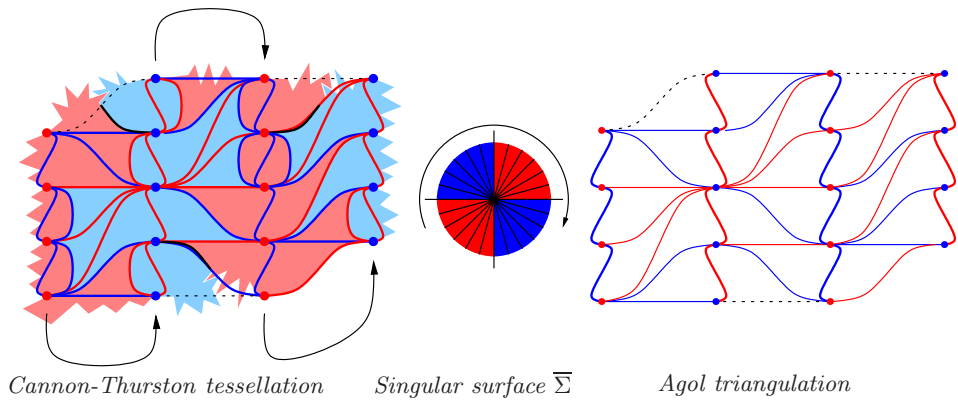} \vspace{7pt}
\caption{Corresponding chunks of the two plane tessellations (the combinatorics in $\overline{\Sigma}$ are not shown).}
\label{full-ladders}
\end{figure} 

In the left panel, only the solid colored areas contain the information on the Cannon-Thurston tessellation. However, we overlaid them with the 1-skeleton of the Agol triangulation to  help visualize the correspondences. Since all Cannon-Thurston edges are isotopic to Agol edges, each of the former receives a natural color (blue or red) even though it always just separates a blue area from a red one. Also, each ladderpole edge in the Agol triangulation is isotopic to 0, 1, or 2 in-furrow edges of the Cannon-Thurston tessellation: in the resulting overlay (left panel) we draw all 1,2 or 3 edges separately but with the same color. This was also the convention in the left panel of Figure \ref{ladderpole}.

In short, this synthesis is useful to illustrate Theorem \ref{thm:dico} and the interplay between the various lines of the ``dictionary''. However, it has two drawbacks:
\begin{itemize}
 \item We cannot draw the corresponding features in the flat surface $\overline{\Sigma}$, as the figure would become too crowded.
 \item In order to show exactly the same information in the two diagrams, we must leave some details ambiguous near the outer boundaries. 
\end{itemize}
For example, forcing a \emph{color} upon the top-left dotted edge in the Agol-triangulation panel would force an \emph{endpoint} (and a color) upon the top-left, unfinished edge of the Cannon-Thurston panel (interrupted here on a dotted arc). Conversely, choosing specific numbers of spikes for the outer cells of the Cannon-Thurston tessellation (presently truncated in a ragged style) would force some partial information upon the adjacent ladders (not pictured) in the Agol triangulation.

The first drawback is unavoidable if we draw too large a portion of the tessellations, but the second is unavoidable as long as we draw only a bounded portion.

\subsection{Semi-local illustration}
Thus, we try to strike a middle ground in Figure~\ref{full-cells}. This figure shows two full cells of the Cannon-Thurston tessellation (left), along with the corresponding data in $\overline{\Sigma}$ (middle) that allows to count their spikes, and the corresponding chunk of the Agol triangulation (right). In the middle panel, the whole green area is singularity-free.

The numbers of spikes chosen are $0,1$ in the top cell (on its left and right sides respectively), and $2,3$ in the bottom cell. 
Up to varying these numbers, one can build the Cannon-Thurston tessellation entirely out of $2$-cells like the blue one shown, and red ones obtained by a horizontal reflection and an exchange of colors.

\begin{figure}[h!]
\centering
\labellist
\small\hair 2pt
\pinlabel $a$ [c] at 13 132
\pinlabel $c$ [c] at 31 134
\pinlabel $b$ [c] at 12 93
\pinlabel $d$ [c] at 32 93
\pinlabel $a$ [c] at 17 60
\pinlabel $c$ [c] at 39 63
\pinlabel $b$ [c] at 17 16
\pinlabel $d$ [c] at 38 16
\pinlabel $a$ [c] at 105 142
\pinlabel $c$ [c] at 94 134
\pinlabel $b$ [c] at 88 122
\pinlabel $d$ [c] at 79 112
\pinlabel $a$ [c] at 105 79
\pinlabel $c$ [c] at 95 73
\pinlabel $b$ [c] at 76 48
\pinlabel $d$ [c] at 69 38
\pinlabel $a$ [c] at 166 123
\pinlabel $c$ [c] at 191 136
\pinlabel $b$ [c] at 168 90
\pinlabel $d$ [c] at 193 95
\pinlabel $a$ [c] at 175 62
\pinlabel $c$ [c] at 195 65
\pinlabel $b$ [c] at 174 14
\pinlabel $d$ [c] at 194 15
\pinlabel $\Omega$ [c] at 114 28
\pinlabel $\Omega$ [c] at 114 103
\endlabellist
\includegraphics[width=12.5cm]{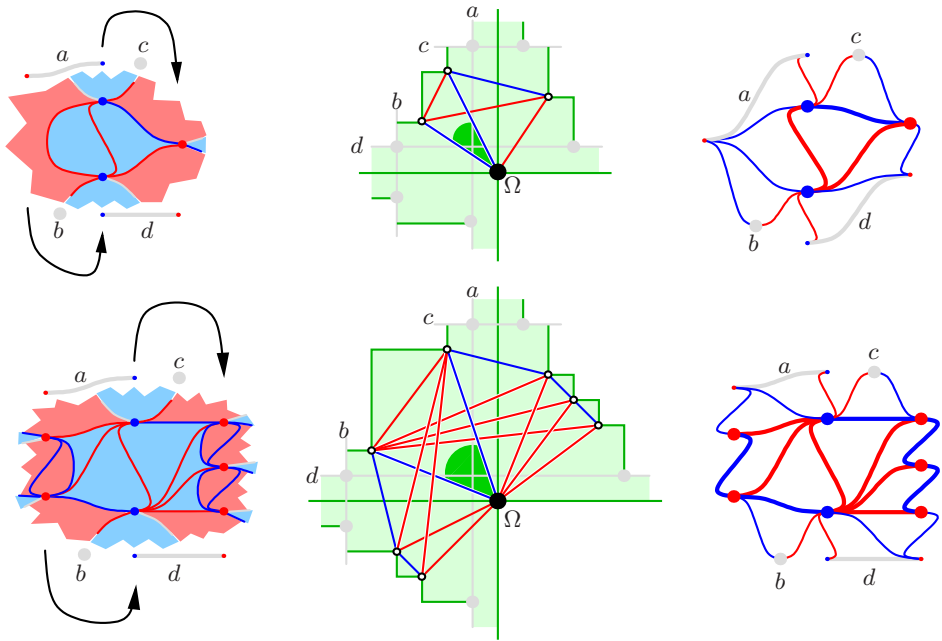}
\caption{The combinatorics associated to full cells (left panel) of the Cannon-Thurston tessellation: the first cell has $0$ spikes on one side and $1$ on the other; the second cell has $2$ and $3$. The corresponding subinterval $\mathcal{I}^i_s$ is shown as a green sector in the middle panel.}
\label{full-cells}
\end{figure} 

The portrayed cell in the left panel is the image, under the Cannon-Thurston map~$\overline{\iota}$, of a subinterval $\mathcal{I}^i_s$ in the \emph{upper left} quadrant in the middle panel. The corresponding ladderpole edge in the right panel is the thick red one (connecting blue vertices).

Moreover, the uncertainty about how the portrayed Cannon-Thurston cell is glued up to its neighbors gives rise to corresponding uncertainties in the other diagrams as well, of which we keep careful track. Namely, letters $a,b,c,d$ refer: 
\begin{itemize}                                                                                                                                                                                                                                                                \item (Fig.\ \ref{full-cells}, left) to the indeterminacy about the endpoint and color of an edge ($a,d$) or the color and position of a vertex ($b,c$);
\item (Fig.\ \ref{full-cells}, middle) to the indeterminacy about which way the tie breaks between the coordinates of two ruling singularities in adjacent quadrants (connected by a grey line);
\item (Fig.\ \ref{full-cells}, right) to the indeterminacy about the color of an edge ($a,d$) or the color and position of a vertex ($b,c$).
\end{itemize}

\subsection{Further subdivisions of the Cannon-Thurston map}

In Figure~\ref{detail014}, we show a schematic diagram of the order in which a cell $\delta^i_s=\overline{\iota}(\mathcal{I}^i_s)$ of the Cannon-Thurston tessellation is filled out. This involves two consecutive ruling singularities $p_s, p_{s+1}$, as well as a maximal sequence of $k\geq 2$ singularities such that any two consecutive of them, together with $p_s$ and $p_{s+1}$, span a maximal singularity-free rectangle with $[p_s, p_{s+1}]$ as the bottom left edge. We show the cases $k=2,3,6$.

Note that for each path $\gamma$ that terminates at a singularity, such as the paths numbered 0, 3, 8, 11 in the case $k=2$ (top), the curve $\overline{\iota}$ actually goes infinitely many times through $\overline{\iota}(\gamma)$ in a neighborhood of $\gamma$ (we just draw one passage for simplicity). Indeed, near $\overline{\iota}(\gamma)$, the trajectory of $\overline{\iota}$ is actually the image of the full boustrophedon under a M\"obius map, with $\overline{\iota}(\gamma)$ playing the role of the point at infinity. This also explains why so-called spikes do look ``spiky'', in the sense that they are pinched between two tangent circles, the M\"obius image of a pair of parallel lines.


\begin{figure}[h!]
\centering
\labellist
\pinlabel $\Omega$ [c] at 142 250
\pinlabel $p_s$ [c] at 157 278
\pinlabel $p_{s+1}$ [c] at 171 257
\pinlabel $\Omega$ [c] at 148 130
\pinlabel $p_s$ [c] at 163 158
\pinlabel $p_{s+1}$ [c] at 177 137
\pinlabel $\Omega$ [c] at 154 14
\pinlabel $p_s$ [c] at 169 42
\pinlabel $p_{s+1}$ [c] at 183 21
\endlabellist
\includegraphics[width=12.5cm]{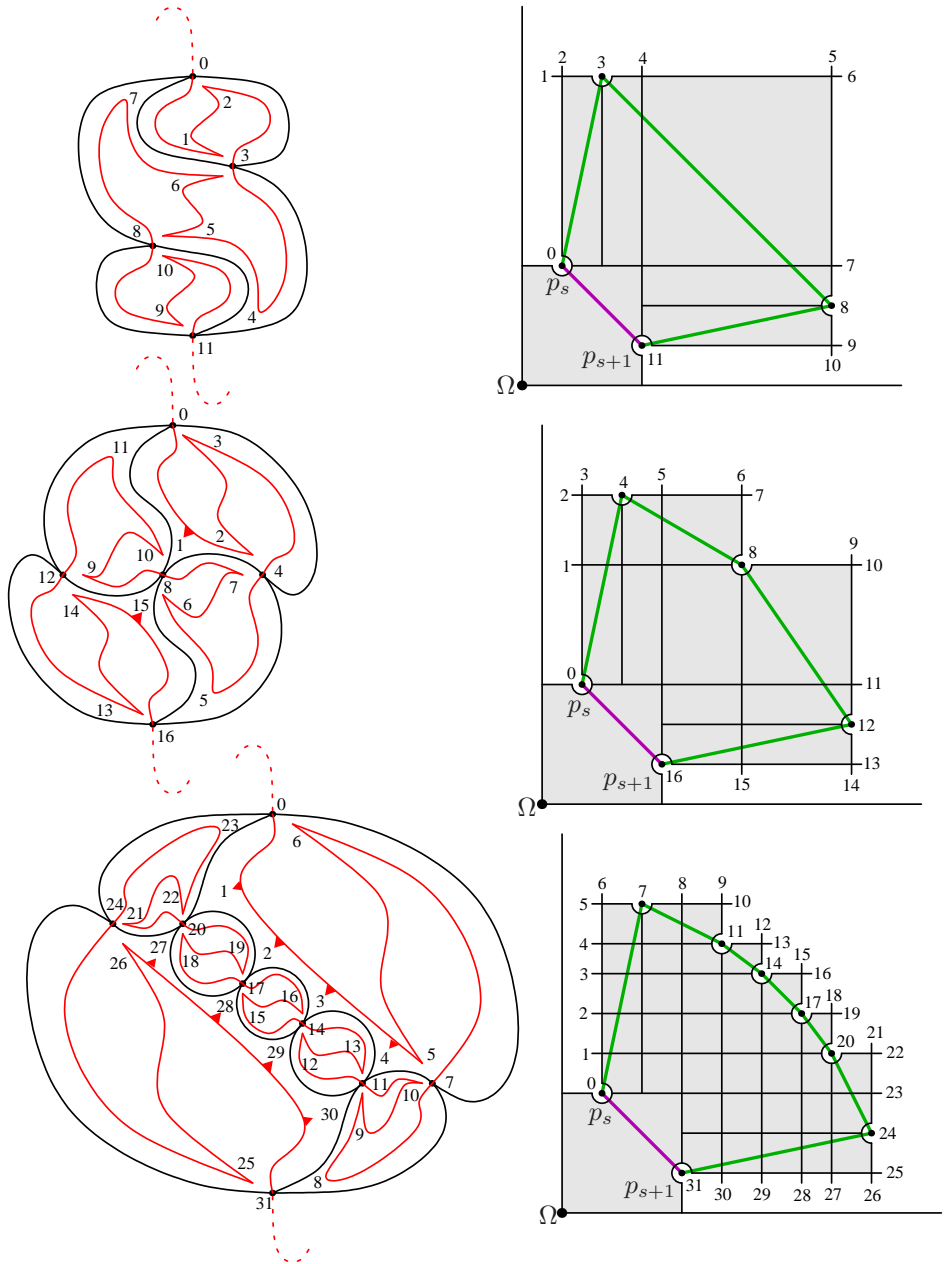}
\caption{Left: the filling curve $\overline{\iota}$, filling a disk $\delta^i_s$. Right: the paths issued from $\Omega$ and crossing the purple segment $[p_s, p_{s+1}]$ can be partitioned according to which green segment they cross next; $\overline{\iota}$ maps this partition to a partition of $\delta^i_s$ into smaller disks. A form of recursion then applies (we show only the first step).}
\label{detail014}
\end{figure}


\begin{thebibliography}{99}

\bibitem{Agol} I.\ Agol, \textit{Ideal Triangulations of Pseudo-Anosov Mapping Tori},  arXiv:1008.1606, in Contemp. Math. 560, Amer. Math. Soc., Providence, RI, (2011), 1--17

\bibitem{L1} N.\ Bonichon, C.\ Gavoille, N.\ Hanusse, L.\ Perkovi\'c, \textit{Tight stretch factors for $\mathrm{L}^1$- and $\mathrm{L}^\infty$-Delaunay triangulations}, Computational Geometry {\bf 48}--3 (2015), 237--250

\bibitem{bowditch} B.H.\ Bowditch, \textit{The Cannon–Thurston map for punctured-surface groups}, 
Mathematische Zeitschrift {\bf 255}-1 (2007), 35--76


\bibitem{CD} J.W.\ Cannon, W.\ Dicks, \textit{On hyperbolic once-punctured-torus bundles. II. Fractal tessellations of the plane}, Geom. Dedicata  123  (2006), 11--63 

\bibitem{Dicks-Sakuma} W.\ Dicks and M.\ Sakuma, \textit{On hyperbolic once-punctured-torus bundles III: comparing two tessellations of the complex plane}, Topology Appl. {\bf 157} (2010), 1873--1899


\bibitem{CT} J.W.\ Cannon, W.P.\ Thurston, \textit{Group Invariant Peano Curves}, Geometry and Topology
11 (2007), 1315--1356

\bibitem{FLP} A.\ Fathi, F.\ Laudenbach, V.\ Po\'enaru, \textit{Travaux de Thurston sur les surfaces}, Astérisque 66--67, Soc. Math. France (1991, 2nd ed.), 286 p. 

\bibitem{FG} D.\ Futer, F.\ Gu\'eritaud, \textit{Explicit angle structures for veering triangulations}, Algebraic \& Geometric Topology {\bf 13} (2013), 205--235

\bibitem{annonce} F.\ Gu\'eritaud, \textit{Veering triangulations and the Cannon-Thurston map}, pp 1419--1420 in:
W.~Jaco, F.\ Lutz, D.\ G\'omez-P\'erez, J.\ Sullivan, \emph{Triangulations}, Oberwolfach Reports {\bf 9}, Issue 2 (2012), 1405--1486.

\bibitem{Issa} C.D.\ Hodgson, A.\ Issa, H.\ Segerman, \textit{Non-geometric veering triangulations}, arXiv:1406.6439 [math.GT]

\bibitem{HRST} C.D.\ Hodgson, J.H.\ Rubinstein, H.\ Segerman, S.\ Tillmann, \textit{Veering triangulations admit strict angle structures},  Geometry \& Topology {\bf 15} (2011), 2073--2089

\bibitem{Klyachin} V.A.\ Klyachin, \textit{Оn a Generalization of the Delaunay Condition}, Vestn. Tomsk. Gos. Univ. Mat. Mekh. 2008 no. 1(2), 48–50 (Russian)

\bibitem{Mj} Mahan Mj, \textit{Cannon-Thurston Maps for Surface Groups}, Annals of Mathematics {\bf 179}-1 (2014), 1--80

\bibitem{Otal} J.-P.\ Otal, \textit{Le th\'eor\`eme d'hyperbolisation pour les vari\'et\'es fibr\'ees de dimension 3}, SMF -- Ast\'erisque 235 (1996), 159 pages

\end{thebibliography}
\end{document}